\newtheorem{theorem}{Theorem}[section]
\newtheorem{proposition}{Proposition}[section]
\newtheorem{corollary}{Corollary}[section]
\newtheorem{lemma}{Lemma}[section]
\newtheorem{conjecture}{Conjecture}
\newtheorem{example}{Example}
\theoremstyle{remark}
\title{Gram Determinant of Planar Curves}
\author{J\'ozef H. Przytycki and Xiaoqi Zhu}
\begin{document}
\maketitle

\begin{abstract}
We investigate the Gram determinant of the bilinear form based on curves 
in a planar surface, with a focus on the disk with two holes. 
We prove that the determinant based on $n-1$ curves divides the determinant 
based on $n$ curves. Motivated by the work on Gram determinants based 
on curves in a disk and curves in an annulus (Temperley-Lieb algebra of 
type $A$ and $B$, respectively), we calculate several examples of the Gram 
determinant based on curves in a disk with two holes and advance 
conjectures on the complete factorization of Gram determinants.
\end{abstract}
\section{Introduction}
Let $F_{0,0}^n$ be a unit disk with $2n$ points on its boundary. Let $\textbf{B}_{n,0}$ be the set of all possible diagrams, up to deformation, in $F_{0,0}^n$ with $n$ non-crossing chords connecting these $2n$ points. It is well-known that $|\textbf{B}_{n,0}|$ is equal to the $n^\text{th}$ Catalan number $C_n = \frac{1}{n+1}{2n \choose n}$ \cite{Sta}. Accordingly, we will call $\textbf{B}_{n,0}$ the set of \textbf{Catalan states}.\\

We will now generalize this setup. Let $F_{0,k} \subset D^2$ be a plane surface with $k+1$ boundary components. $F_{0,0} = D^2$, and for $k \ge 1$, $F_{0,k}$ is equal to $D^2$ with $k$ holes. Let $F_{0,k}^n$ be $F_{0,k}$ with $2n$ points, $a_0, \ldots, a_{2n-1}$, arranged counter-clockwise along the outer boundary, cf. Figure \ref{fig1}.

\begin{figure}[htbp]
\begin{center}
\includegraphics[scale=0.9]{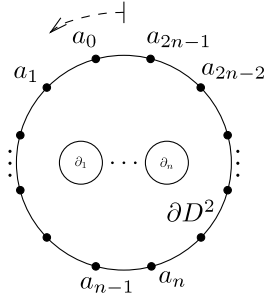}
\end{center}
\caption{Throughout the paper, we number the points counter-clockwise beginning at the top of outer boundary. We label and differentiate between the holes.}
\label{fig1}
\end{figure}

Let $\textbf{B}_{n,k}$ be the set of all possible diagrams, up to equivalence, in $F_{0,k}^n$ with $n$ non-crossing chords connecting these $2n$ points. We define equivalence as follows: for each diagram $b \in \textbf{B}_{n,k}$, there is a corresponding diagram $\gamma(b) \in \textbf{B}_{n,0}$ obtained by filling the $k$ holes in $b$. We call $\gamma(b)$ the underlying Catalan state of $b$ (cf. Figure \ref{catalan}).
\begin{figure}[htbp]
\begin{center}
\includegraphics[scale=1]{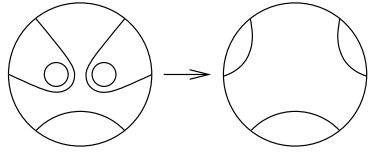}
\end{center}
\caption{$b \mapsto \gamma(b)$}
\label{catalan}
\end{figure}
In addition, a given diagram in $F_{0,k}^n$ partitions $F_{0,k}$ into $n+1$ regions. Two diagrams are equivalent if and only if they have the same underlying Catalan state and the labeled holes are distributed in the same manner across regions. Accordingly, $|\textbf{B}_{n,k}| = (n+1)^{k-1}{2n \choose n}$. We remark that in the cases $k=0$ and $k=1$, two diagrams are equivalent if they are homotopic, but for $k>2$, this is not the case (for an example, 
see Figure \ref{fig3}).\\

In this paper, we define a pairing over $\textbf{B}_{n,k}$ and investigate 
the Gram matrix of the pairing. This concept is a generalization of a problem 
posed by W. B. R. Lickorish for type $A$ (based on a disk, i.e. $k=0$) 
Gram determinants, and Rodica Simion for type $B$ (based on an annulus, i.e. 
$k=1$) Gram determinants, cf. \cite{Lic-1, Lic-2}, \cite{Sch, Sim}. 
Significant research has been completed for the Gram determinants for type 
$A$ and $B$. In particular, P. Di Francesco and B. W. Westbury gave a closed 
formula for the type $A$ Gram determinant \cite{DiF}, \cite{Wes}; 
a complete factorization of the type $B$ Gram determinant was conjectured 
by Gefry Barad and a closed formula was proven by Q. Chen and J. H. Przytycki 
\cite{Che} (see also \cite{MS}). The type $A$ Gram determinant was used by Lickorish to find an elementary construction of Reshetikhin-Turaev-Witten invariants of oriented closed 3-manifolds.\\

We specifically investigate the Gram determinant $G_n$ of the bilinear form defined over $\textbf{B}_{n,2}$ and prove that $\det G_{n-1}$ divides $\det G_n$ for $n > 1$. Furthermore, we investigate the diagonal entries of $G_n$ and give a method for computing terms of maximal degree in $\det G_n$. We conclude the paper by briefly discussing generalizations of the Gram determinant and presenting some open questions.

\section{Definitions for $\textbf{B}_{n,2}$}
Consider $F_{0,2}^n$, a unit disk with two holes, along with $2n$ points along the outer boundary. Denote the holes in $F_{0,2}^n$ by $\partial_{X_1}$ and $\partial_{Y_1}$, or more simply, just $X_1$ and $Y_1$. To differentiate between the two holes, we will always place $X_1$ to the left and $Y_1$ to the right if labels are not present.

\begin{figure}[htbp]
\begin{center}
\includegraphics{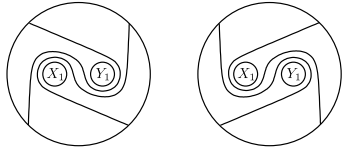}
\end{center}
\caption{Two non-isotopic but equivalent diagrams in $F_{0,2}^2$. 
They correspond to the same state in $\textbf{B}_2$. A complete specification of $\textbf{B}_2$ can be found in the Appendix.}
\label{fig3}
\end{figure}

Let $\textbf{B}_n := \textbf{B}_{n,2} := \{b_1^n, \ldots, b_{(n+1){2n \choose n}}^n\}$, the set of all possible diagrams, up to equivalence in $F_{0,2}^n$ with $n$ non-crossing chords connecting these $2n$ points. For simplicity, we will often use $b_i$ instead of $b_i^n$, when the number of points along the outer boundary can be inferred from context.

\begin{figure}[htbp]
\begin{center}
\includegraphics{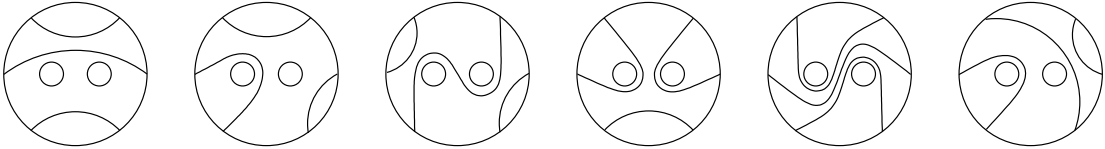}
\end{center}
\caption{Pictorial representations of six states $\{b_1, b_2, b_3, b_4, b_5, b_6\} \subset \textbf{B}_3$. We stress that this is not a natural ordering of states in $\textbf{B}_3$.}
\label{fig2}
\end{figure}

Let $X_2$ and $Y_2$ be the inversions\footnote{Inversion is an involution 
defined on the sphere $\mathbb{C} \cup \infty$ by $z \leftrightarrow \frac{z}{|z|^2}$.} of $X_1$ and $Y_1$, respectively, with respect to the unit disk, and let $\mathcal{S} = \{X_1, X_2, Y_1, Y_2\}$. Given $b_i \in \textbf{B}_n$, let ${b_i}^*$ denote the inversion of $b_i$. Given $b_i, b_j \in \textbf{B}_n$, we glue $b_i$ with ${b_j}^*$ along the outer boundary, respecting the labels of the marked points. $b_i$ and $b_j$ each contains $n$ non-crossing chords, so $b_i \circ {b_j}^*$ can have at most $n$ closed curves. The resulting diagram, denoted by $b_i \circ {b_j}^*$, is then a set of up to $n$ closed curves in the 2-dimensional sphere ($D^2 \cup {(D^2)}^*$) with four holes: $X_1, X_2, Y_1, Y_2$ (we disregard the outer boundary, $\partial D^2$). Each closed curve partitions the set $\mathcal{S}$ into two sets. Two closed curves are of the same type if they partition $\mathcal{S}$ the same way. For each $b_i \circ {b_j}^*$, there are then up to eight types of disjoint closed curves, whose multiplicities we index by the following variables:
\begin{eqnarray*}
n_d &= &\text{the number of curves with $\{X_1, X_2, Y_1, Y_2\}$ on the same side}\\
n_{x_1} &= &\text{the number of curves that separate $\{X_1\}$ from $\{X_2, Y_1, Y_2\}$}\\
n_{x_2} &= &\text{the number of curves that separate $\{X_2\}$ from $\{X_1, Y_1, Y_2\}$}\\
n_{y_1} &= &\text{the number of curves that separate $\{Y_1\}$ from $\{X_1, X_2, Y_2\}$}\\
n_{y_2} &= &\text{the number of curves that separate $\{Y_2\}$ from $\{X_1, X_2, Y_1\}$}\\
n_{z_1} &= &\text{the number of curves that separate $\{X_1, X_2\}$ from $\{Y_1, Y_2\}$}\\
n_{z_2} &= &\text{the number of curves that separate $\{X_1, Y_1\}$ from $\{X_2, Y_2\}$}\\
n_{z_3} &= &\text{the number of curves that separate $\{X_1, Y_2\}$ from $\{X_2, Y_1\}$}
\end{eqnarray*}

Let $R:= \mathbb{Z} [d,x_1,x_2,y_1,y_2,z_1,z_2,z_3]$, and $R\textbf{B}_n$ be the free module over the ring $R$ with basis $\textbf{B}_n$. We define a bilinear form $\langle , \rangle : R\textbf{B}_n \times R\textbf{B}_n \rightarrow R$ by: $$\langle b_i,b_j\rangle = d^{n_d}{x_1}^{n_{x_1}}{x_2}^{n_{x_2}}{y_1}^{n_{y_1}}{y_2}^{n_{y_2}}{z_1}^{n_{z_1}}{z_2}^{n_{z_2}}{z_3}^{n_{z_3}}$$
$\langle b_i, b_j \rangle$ is a monomial of degree at most $n$.  Some examples of paired diagrams and their corresponding monomials, using examples from 
Figure \ref{fig2}, are given in Figure \ref{fig4}.

\begin{figure}[htbp]
\begin{center}
\includegraphics[scale=0.9]{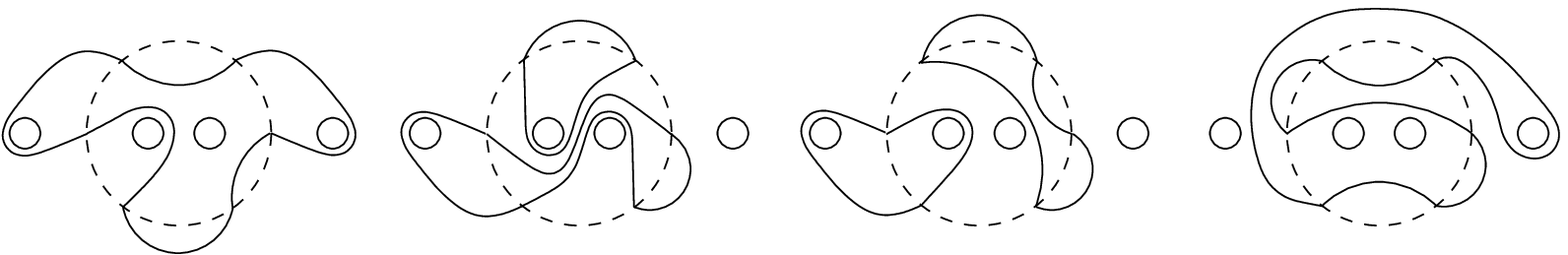}
\end{center}
\caption{From left to right:
$$\langle b_2, b_4\rangle = x_1 \quad \langle b_5, b_2\rangle = x_1x_2 \quad \langle b_6, b_2\rangle = dz_1 \quad \langle b_1, b_3\rangle = x_2$$}
\label{fig4}
\end{figure}

Let
$$G_n = \begin{pmatrix}g_{ij}\end{pmatrix} = {\begin{pmatrix}\langle b_i, b_j \rangle\end{pmatrix}}_{1 \le i,j \le (n+1){2n \choose n}}$$
be the Gram matrix of the pairing on $\textbf{B}_n$. For example,
$$
G_1 = \begin{bmatrix}
d&y_2&x_2&z_2\\
y_1&z_1&z_3&x_1\\
x_1&z_3&z_1&y_1\\
z_2&x_2&y_2&d
\end{bmatrix}
\text{ up to ordering of }\textbf{B}_1 \text{ and}
$$
\begin{eqnarray*}
\det G_1 &= &((d+z_2)(z_1+z_3)-(x_1+y_1)(x_2+y_2))
\\&&((d-z_2)(z_1-z_3)-(x_1-y_1)(x_2-y_2)).
\end{eqnarray*}

\begin{figure}[htbp]
\begin{center}
\includegraphics[scale=0.6]{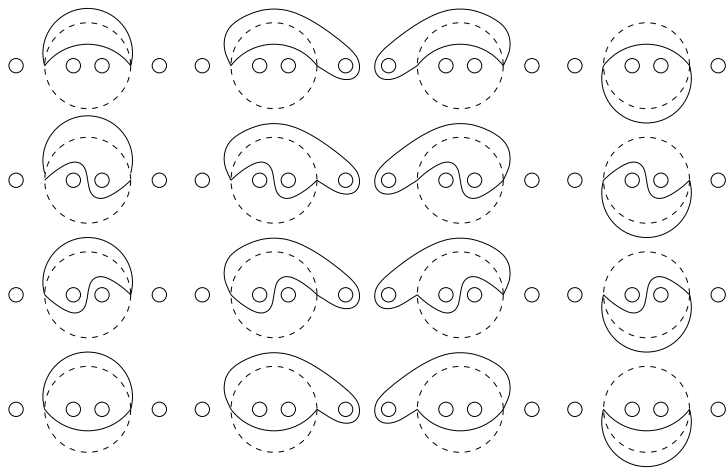}
\end{center}
\caption{A pictorial representation of curves used to define $G_1$}
\label{fig5}
\end{figure}

We remark that for $b_i, b_j \in \textbf{B}_n$, $\langle b_j, b_i \rangle$ 
can be obtained by taking $b_i \circ {b_j}^*$ and interchanging the roles 
of $X_1$ and $Y_1$ with $X_2$ and $Y_2$, respectively. Let $h_t$ be an 
involution on the entries of $G_n$ which interchanges the variables 
$x_1$ with $x_2$ and $y_1$ with $y_2$. 
It follows that $\langle b_i, b_j\rangle = h_t(\langle b_j, b_i \rangle)$.  
The transpose matrix is then given by:
$$^tG_n = \begin{pmatrix}h_t(\langle b_i, b_j \rangle)\end{pmatrix}$$
We note that the variables $d, z_1, z_2, z_3$ are preserved by $h_t$ 
(cf. Theorem 3.2(4)).\\

We can define more generally: given 
$A = \{b_{n_1}, b_{n_2}, \ldots, b_{n_p}\} \subseteq \textbf{B}_n$ and 
$B = \{b_{m_1}, b_{m_2}, \ldots, b_{m_q}\} \subseteq \textbf{B}_n$, 
let $\langle A, B\rangle$ be an $p \times q$ submatrix of $G_n$ given by:
$$\langle A, B\rangle = {\begin{pmatrix}\langle b_{n_i}, b_{m_j} 
\rangle\end{pmatrix}}_{1 \le i \le p, 1 \le j \le q}$$
For example, we can express the matrix $G_n$ as $\langle \textbf{B}_n, \textbf{B}_n\rangle$. The $i^{\text{th}}$ row of $G_n$ can be written as $\langle b_i, \textbf{B}_n\rangle$.\\

This paper is mostly devoted to exploring possible factorizations of $\det G_n$, and is the first step toward computing $\det G_n$ in full generality, which we conjecture to have a nice decomposition.\\

Let $i_0: \textbf{B}_n \rightarrow \textbf{B}_{n+1}$ be the \textbf{embedding} map defined as follows: for $b_i \in \textbf{B}_n$, $i_0(b_i) \in \textbf{B}_{n+1}$ is given by adjoining to $b_i$ a non-crossing chord close to the outer boundary that intersects the outer circle at two points between $a_0$ and $a_{2n-1}$, cf. upper part of Figure \ref{fig7}.\\

We will also use a generalization of $i_0$, for which we need first the following definition. For any real number $\alpha$, consider the homeomorphism $r_\alpha: \mathbb{C} \rightarrow \mathbb{C}$ on the annulus $R' \le |z| \le 1$, which we call the \textbf{$\alpha$-Dehn Twist}, defined by:
$$r_\alpha(z) = ze^{i\alpha\left(1-(1-|z|)/(1-R')\right)}$$
Note that $r_\alpha(z) = z$ as $|z| = R'$. Therefore, we can extend the domain of $r_\alpha$ to $D^2$ by defining $r_\alpha(z) = z$ for $0 \le |z| \le R'$. Fix $R'$ such that a circle of radius $R'$ encloses $X_1$ and $Y_1$. Then $r_\alpha$ acts on $b_i \in \textbf{B}_n$ as a clockwise rotation of a diagram close to the outer boundary.\\

\begin{figure}[htbp]
\begin{center}
\includegraphics{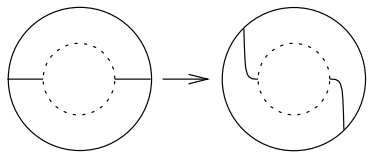}
\end{center}
\caption{A $\pi/4$-Dehn Twist. Note that $r_{2\pi}(b_i) = b_i$ (cf. Figure \ref{fig3}).}
\label{fig6}
\end{figure}

Consider the \textbf{$k$-conjugated embedding} $i_k: \textbf{B}_n \rightarrow \textbf{B}_{n+1}$ defined by:
$$i_k(b_i) = {r_{\pi/n+1}}^ki_0{r_{\pi/n}}^{-k}(b_i)$$
Intuitively, if for $b_i \in \textbf{B}_{n+1}$ there exists $b_j \in \textbf{B}_n$ such that $i_k(b_j) = b_i$, then $b_i$ is composed of $b_j$ and a non-crossing chord close to the outer boundary connecting $a_k$ and $a_{k-1}$\footnote{Throughout this paper, we use $a_k$ and $a_{k-1}$ to denote two adjacent points along the outer boundary, where $k$ is taken modulo $2n$.}, Figure \ref{fig7}.

\begin{figure}[htbp]
\begin{center}
\includegraphics{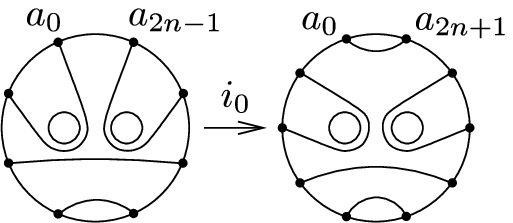}
\\
\
\\
\includegraphics{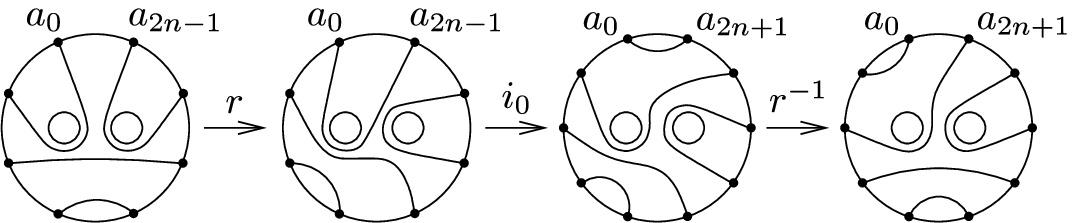}
\end{center}
\caption{An embedding $b_i \mapsto i_0(b_i)$, top; a 1-conjugated embedding $b_i \mapsto i_1(b_i)$, bottom; $b_i \in \textbf{B}_4$.}
\label{fig7}
\end{figure}

For every $b_i \in \textbf{B}_n$, let $p_k(b_i)$ be the diagram obtained by gluing to $b_i$ a non-crossing chord connecting $a_k$ and $a_{k-1}$ outside the circle, and pushing the chord inside the circle. The properties of $p_k$ will be explored in greater detail in Section 4. We conclude this section with a basic identity linking $i_0$ and $p_0$:

\begin{proposition}
For any $b_i \in \textbf{B}_n$, $b_j \in \textbf{B}_{n-1}$, $b_i \circ {i_0(b_j)}^* = p_0(b_i) \circ {b_j}^*$.
\end{proposition}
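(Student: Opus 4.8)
The plan is to unwind both sides to the same geometric picture on the sphere, rather than manipulating monomials. Recall that $b_i \circ {b_j}^*$ is defined by gluing $b_i$ (in $D^2$) to the inverted diagram ${b_j}^*$ (in $(D^2)^*$) along the outer boundary circle $\partial D^2$, matching the marked points. So $b_i \circ {i_0(b_j)}^*$ means we first build $i_0(b_j) \in \textbf{B}_n$ by adjoining to $b_j \in \textbf{B}_{n-1}$ a small chord $\delta$ near $\partial D^2$ connecting the two new points $a_0'$ and $a_{2n-1}'$ (in the $n$-point labeling), then invert the whole thing, then glue to $b_i$. On the other side, $p_0(b_i)$ is obtained from $b_i \in \textbf{B}_n$ by attaching a chord from $a_0$ to $a_{2n-1}$ \emph{outside} the unit circle and pushing it in; then we glue $p_0(b_i)$ to ${b_j}^*$ with $b_j \in \textbf{B}_{n-1}$ along the appropriate $(2n-2)$-point boundary.

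First I would make precise the identification of marked points: the two points $a_0, a_{2n-1}$ of $b_i \in \textbf{B}_n$ that are "absorbed" by $p_0$ are exactly the images under inversion of the two points where $i_0$'s new chord $\delta$ meets $\partial D^2$. Then I would observe that in $b_i \circ {i_0(b_j)}^*$, the inverted chord $\delta^*$ lies in $(D^2)^*$ just outside $\partial D^2$ and connects (the inversions of) $a_0$ and $a_{2n-1}$; when glued to $b_i$, the arc of $b_i$ leaving $a_0$ and the arc leaving $a_{2n-1}$ get connected through $\delta^*$. Symmetrically, forming $p_0(b_i)$ first attaches a chord from $a_0$ to $a_{2n-1}$ just outside $\partial D^2$ — geometrically the \emph{same} arc as $\delta^*$ — and then the remaining boundary points $a_1, \ldots, a_{2n-2}$ are glued to ${b_j}^*$. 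So both constructions produce, on the sphere with the four holes $X_1, X_2, Y_1, Y_2$, literally the same union of closed curves: $b_i$ in $D^2$, $b_j^*$ in $(D^2)^*$, and the bridging arc between $a_0$ and $a_{2n-1}$ sitting across the (now irrelevant) circle $\partial D^2$. I would draw this as a single figure showing the common diagram, with the circle $\partial D^2$ drawn dashed to emphasize it plays no role.

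The key steps, in order: (1) fix notation for the $2n$ points of $b_i$ and the $2n-2$ points of $b_j$, and identify which points of $b_i$ survive the two gluings; (2) describe $\delta^*$ and show it coincides with the pushed-in chord of $p_0$; (3) check that away from a neighborhood of $\delta^* \cup \{a_0, a_{2n-1}\}$, the two glued diagrams are identical by construction; (4) conclude the two diagrams are isotopic in the four-holed sphere, hence have the same curve-type multiplicities $n_d, n_{x_1}, \ldots, n_{z_3}$, and therefore the same pairing monomial. Since $\langle b_i, i_0(b_j) \rangle$ and the pairing computed from $p_0(b_i) \circ b_j^*$ are by definition these monomials, equality of the diagrams gives the claimed identity; and in fact the statement as written is the stronger equality of diagrams themselves, which is what steps (1)--(4) establish.

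The main obstacle is purely bookkeeping: being careful that the inversion $r \mapsto r/|r|^2$ sends the small chord $\delta$ sitting \emph{inside} $D^2$ near the boundary to a small arc sitting \emph{outside} the unit circle — i.e. into $(D^2)^*$ near $\partial D^2$ — and that this matches the "attach outside then push in" recipe for $p_0$ up to an isotopy supported in a collar of $\partial D^2$. There is also a minor subtlety in matching the labeling conventions for the marked points between $\textbf{B}_n$ and $\textbf{B}_{n-1}$ (the footnote convention that indices are mod $2n$), but this does not affect the topology. I expect no analytic difficulty; the proof is essentially a single well-chosen picture together with the remark that $\partial D^2$ is discarded in forming $b_i \circ b_j^*$.
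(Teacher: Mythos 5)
Your proposal is correct: both sides literally coincide as configurations of closed curves in the four-holed sphere, since the inverted chord $\delta^*$ of $i_0(b_j)^*$ is the same bridging arc between $a_0$ and $a_{2n-1}$ that $p_0$ attaches to $b_i$, and the pushing-in of $p_0$ is an isotopy supported in a collar of $\partial D^2$ that crosses no holes. The paper states this proposition without proof as a basic identity, and your picture-based gluing argument is exactly the justification the authors leave implicit.
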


\section{Basic Properties of Gram Determinant}
In this section, we prove basic properties of $\det G_n$. In particular, we show that the determinant of $G_n$ is nonzero.
\begin{lemma}
$\langle b_i, b_j \rangle$ is a monomial of maximal degree if and only if $\gamma(b_i) = \gamma(b_j)$.
\end{lemma}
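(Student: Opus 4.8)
The plan is to reduce the statement to the classical fact about Temperley--Lieb (type $A$) pairings of Catalan states in the disk. First I would observe that $\langle b_i,b_j\rangle$ is a monomial whose total degree $n_d+n_{x_1}+\cdots+n_{z_3}$ is exactly the number of disjoint closed curves in $b_i\circ {b_j}^*$ on the four--holed sphere $D^2\cup (D^2)^*$. Since this degree is always $\le n$, "maximal degree" means degree exactly $n$, i.e. $b_i\circ {b_j}^*$ consists of $n$ closed curves. The key first step is that this number is unchanged when we fill in the four holes $X_1,X_2,Y_1,Y_2$: the closed curves lie in the interior of the surface and miss every boundary circle, so capping off a boundary component changes the ambient surface but neither merges, splits, nor destroys any curve. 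Hence the number of closed curves of $b_i\circ {b_j}^*$ equals that of the filled--in diagram, which is precisely $\gamma(b_i)\circ \gamma(b_j)^*$ (the holes of $b_i$ being $X_1,Y_1$ and those of ${b_j}^*$ being $X_2,Y_2$, so that filling all four yields $\gamma(b_i)$ glued with the inversion of $\gamma(b_j)$).

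It therefore suffices to prove: for Catalan states $c,c'\in\textbf{B}_{n,0}$, the diagram $c\circ {c'}^*$ has at most $n$ closed curves, with equality if and only if $c=c'$. I would argue by tracing a closed curve. Every chord of $c$ and of ${c'}^*$ has both endpoints among the $2n$ marked points on the unit circle, and at each marked point exactly one chord of $c$ and one chord of ${c'}^*$ meet; consequently no closed curve lies entirely in $D^2$ or entirely in $(D^2)^*$, and each closed curve alternately traverses a chord of $c$ and a chord of ${c'}^*$, using some number $k\ge 1$ of chords from each. Summing over all closed curves gives $\sum k=n$, the total number of chords of $c$, so the number of closed curves is $\le n$, with equality exactly when every curve uses one chord of $c$ and one chord of ${c'}^*$. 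A curve using a single chord $\{a_\ell,a_m\}$ of $c$ forces the chord of ${c'}^*$ through $a_\ell$ (equivalently through $a_m$) to join $a_\ell$ and $a_m$ as well; since inversion fixes the marked points, $c$ and $c'$ then contain the identical chord $\{a_\ell,a_m\}$. Thus equality holds iff $c$ and $c'$ induce the same non--crossing matching of the $2n$ points, i.e. $c=c'$, which unwinds to $\gamma(b_i)=\gamma(b_j)$. That degree $n$ is actually attained (hence is the maximal degree) follows by taking $b_j=b_i$.

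The degree/curve--count identification and the counting argument are routine; the points needing a little care are the hole--filling invariance in the first paragraph and the bookkeeping of the inversion when passing between $b_j$ and ${b_j}^*$, since that is what produces the two pieces $\gamma(b_i)$ and $\gamma(b_j)^*$ of the glued--and--filled diagram. I expect the main, though still mild, obstacle to be phrasing the alternation argument cleanly --- in particular justifying that a closed curve cannot pass "straight through" a marked point and cannot consist of a single closed chord --- rather than any substantive difficulty.
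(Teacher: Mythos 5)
Your proof is correct and follows essentially the same route as the paper's: the degree of $\langle b_i,b_j\rangle$ counts the closed curves of $b_i\circ {b_j}^*$, each closed curve alternates between arcs of $b_i$ and of ${b_j}^*$, so there are $n$ curves exactly when each curve consists of one arc from each diagram, which forces the chords (hence the underlying Catalan states) to coincide. Your extra step of filling in the holes is harmless but not needed, since the curve count and the alternation argument take place on the glued diagram regardless of the holes; the paper simply runs the same counting argument directly on $b_i\circ {b_j}^*$.
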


\begin{proof}
$b_i \circ {b_j}^*$ has $n$ closed curves if and only if each closed curve is formed by exactly two arcs, one in $b_i$ and one in ${b_j}^*$. Hence, any two points connected by a chord in $b_i$ must also be connected by a chord in $b_j$, so $\gamma(b_i) = \gamma(b_j)$.
\end{proof}

\begin{theorem}
$\det G_n \neq 0$ for all integers $n \ge 1$.
\end{theorem}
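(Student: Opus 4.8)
The plan is to show $\det G_n \neq 0$ by exhibiting a specialization of the variables $d, x_1, x_2, y_1, y_2, z_1, z_2, z_3$ to concrete integers (or values in a field) at which the numerical Gram matrix is nonsingular. The natural choice is the specialization coming from an actual evaluation of diagrams, for instance sending each closed curve to the value $2$ (or to $-2$, or to a generic complex number), which is precisely how Gram determinants of Temperley--Lieb type are usually shown to be nonzero: one passes to the category of $TL$-modules over $\mathbb{C}$ where the bilinear form is related to an honest (possibly indefinite but nondegenerate) Hermitian or symmetric form. Concretely, I would look for a representation-theoretic or ``state-sum'' interpretation of $\langle\,,\,\rangle$ under which the Gram matrix becomes, after specialization, the Gram matrix of a spanning set of vectors in an inner-product space, so nondegeneracy of the form forces $\det \neq 0$.

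If a clean representation-theoretic model is not immediately available, the fallback — and probably the route the authors take — is a triangularity argument. By Lemma 3.1, $\langle b_i, b_j\rangle$ has maximal degree $n$ exactly when $\gamma(b_i) = \gamma(b_j)$, i.e.\ when $b_i$ and $b_j$ have the same underlying Catalan state. Group the basis $\textbf{B}_n$ by underlying Catalan state; within each block the entries are degree-$n$ monomials and across blocks the entries have degree $<n$. So if one orders $\textbf{B}_n$ by Catalan state, $G_n$ is block-``diagonal up to lower-degree terms.'' The idea is then: (1) choose a specialization in which the top-degree part dominates — e.g.\ introduce a grading variable, or specialize so that each diagonal block has nonzero determinant while off-diagonal blocks are negligible; (2) show each diagonal block (all pairs with a fixed Catalan state $c$, which differ only in how the two holes are distributed among the $n+1$ regions of $c$) has nonzero determinant; (3) conclude $\det G_n \neq 0$ by a dominant-term / Schur-complement argument. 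Step (2) reduces to understanding, for a fixed Catalan state, the $(n+1)^2 \times (n+1)^2$ (or smaller, depending on the exact count) matrix recording which region each hole lands in after gluing — this should factor as something like a tensor/Kronecker product of two ``hole-distribution'' matrices, one for $X_1$ and one for $Y_1$, each of which is a combinatorial incidence matrix that one can check is invertible.

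A cleaner variant, and the one I would try first within the triangularity framework, is induction on $n$ using the embedding $i_0$ and Proposition 2.1. One restricts $G_n$ to the rows and columns indexed by $i_0(\textbf{B}_{n-1})$; by Proposition 2.1, $\langle i_0(b_i), i_0(b_j)\rangle = \langle p_0 i_0(b_i), b_j\rangle$, and $p_0 \circ i_0$ should add a trivial (null-homotopic, $\mathcal S$-free) circle, contributing a clean factor of $d$ — so this submatrix is $d \cdot G_{n-1}$ (this is essentially the divisibility statement advertised in the abstract). If one can further argue that the Schur complement of this block in $G_n$ is nonzero — e.g.\ because the remaining basis elements pair with something of strictly controlled degree — then $\det G_n \neq 0$ follows from $\det G_{n-1} \neq 0$ and the base case $\det G_1 \neq 0$, which is checked directly from the explicit factorization of $\det G_1$ given in the text (each of the two quadratic factors is visibly a nonzero polynomial).

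The main obstacle is controlling the ``lower-degree'' or off-block part: degree strictly less than $n$ only tells us those entries are not of top degree, not that they vanish under whatever specialization we pick, so the dominant-term argument needs an actual valuation/filtration argument (e.g.\ assign weights to the variables so that the top-degree diagonal term of each Catalan block strictly beats every competing term in the Leibniz expansion of $\det$), and making such a weighting exist in all cases is the delicate point. In the inductive approach, the analogous obstacle is showing the Schur complement is nonzero rather than merely lower-degree; this is where one genuinely uses the geometry of the disk with two holes rather than formal degree bookkeeping, and I expect that is where the real work of the proof lies.
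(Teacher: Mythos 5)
There is a genuine gap: you lay out three candidate strategies (specialization to a numerical inner product, a dominant-term argument over Catalan blocks, and an inductive Schur-complement argument), but none of them is carried to completion, and in each case you stop precisely at the decisive step and say so yourself. For the route closest to workable --- grouping $\textbf{B}_n$ by underlying Catalan state and arguing that the top-degree part $h(\det G_n)=\prod_k \det\langle A_k,A_k\rangle$ is nonzero --- the reduction itself is fine (total degree is already the needed filtration, since every entry is a monomial of degree at most $n$, so no extra weighting is required), but the essential step (2), nonvanishing of each block determinant, is left to a speculated Kronecker-product factorization of the ``hole-distribution'' matrices. That mechanism fails already at $n=1$: the single Catalan block there is all of $G_1$, whose determinant is the product of two distinct quadratics and in particular not a perfect square, whereas a $4\times 4$ Kronecker product of $2\times 2$ matrices has determinant $(\det A)^2(\det B)^2$. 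The variables $z_1,z_2,z_3$ genuinely couple the two holes, so the blocks do not decompose hole-by-hole. The inductive route has the same unresolved core (nonvanishing of the Schur complement), and the specialization route is not instantiated at all.

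The paper's actual proof is much lighter and sidesteps block determinants entirely: it isolates a single monomial of $\det G_n$ and shows its coefficient is nonzero. Specifically, one asks which maximal-degree entries $\langle b_i,b_j\rangle$ involve \emph{only} the variables $d$ and $z_1$. By Lemma 3.1 maximal degree forces $\gamma(b_i)=\gamma(b_j)$, so corresponding chords of $b_i$ and $b_j$ close up pairwise; a chord can sit in one of four positions relative to the two holes, and a short case check shows that if each resulting closed curve is of type $d$ or $z_1$ (i.e.\ separates $\{X_1,X_2\}$ from $\{Y_1,Y_2\}$ or neither), the two arcs must coincide, whence $b_i=b_j$. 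Consequently, in the Leibniz expansion the identity permutation is the \emph{only} one producing a degree-$n(n+1)\binom{2n}{n}$ monomial in $d$ and $z_1$ alone, so $\delta(n)=\prod_i\langle b_i,b_i\rangle=d^{\alpha(n)}z_1^{\beta(n)}$ appears in $\det G_n$ with coefficient $1$ and the determinant cannot vanish. This is the refinement your dominant-term sketch is missing: rather than proving every top-degree block determinant is nonzero, restrict to a sub-monoid of monomials in which the diagonal term is provably unrivaled.
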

\begin{proof}
Assume $\langle b_i, b_j \rangle$ is a monomial of maximal degree consisting 
only of the variables $d$ and $z_1$. Because $\gamma(b_i) = \gamma(b_j)$ by 
Lemma 3.1, it follows that any two points connected in $b_i$ are also 
connected in $b_j$. Each connection in $b_i$ can be drawn in four different 
ways with respect to $X$ and $Y$, since there are two ways to position 
the chord relative to each hole. 
Because $\langle b_i, b_j \rangle$ is assumed to consist only of the 
variables $d$ and $z_1$, 
it follows that each pair of arcs that form a closed curve in $b_i \circ {b_j}^*$ either separates $\{X_1, X_2\}$ from $\{Y_1, Y_2\}$ or has $\{X_1, X_2, Y_1, Y_2\}$ on the same side of the curve. One can check each of the four cases to see that this condition implies that any two arcs that form a closed curve in $b_i \circ {b_j}^*$ must be equal, so $b_i = b_j$. Using Laplacian expansion, this implies that the product of the diagonal of $G_n$ is the unique summand of degree $ n(n+1){\binom{2n}n}$ in $\det G_n$ consisting only of the variables $d$ and $z_1$.
\end{proof}

We need the following notation for the next theorem: 
let $f: \alpha_1 \leftrightarrow \alpha_2$ denote a function $f$ which acts 
on the entries of $G_n$ by interchanging variables $\alpha_1$ with $\alpha_2$. 
We can extend the domain of $f$ to $G_n$. Let $f(G_n)$ denote the matrix 
formed by applying $f$ to all the individual entries of $G_n$.\\

Let $h_1,h_2,h_3$ be involutions acting on the entries of $G_n$ with 
the following definitions:
\begin{enumerate}
\item $h_1: x_1 \leftrightarrow y_1 \quad z_1 \leftrightarrow z_3$
\item $h_2: x_2 \leftrightarrow y_2 \quad z_1 \leftrightarrow z_3$
\item $h_3=h_1h_2: x_1 \leftrightarrow y_1 \quad x_2 \leftrightarrow y_2$
\item $h_t: x_1 \leftrightarrow x_2 \quad y_1 \leftrightarrow y_2$
\end{enumerate}

\begin{theorem}
\
\begin{enumerate}
\item $\det h_1(G_1) = -\det G_1$, and
 for $n>1$, $\det h_1(G_n) =\det G_n$.
\item $\det h_2(G_1) = -\det G_1$, and
 for $n>1$, $\det h_2(G_n) =\det G_n$.
\item $\det h_3(G_n) =\det G_n$.
\item $\det h_t(G_n) =\det G_n$.
\end{enumerate}
\end{theorem}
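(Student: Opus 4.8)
The plan is to realize each involution $h_1, h_2, h_3, h_t$ as conjugation of $G_n$ by a suitable permutation matrix (up to a global sign in the $n=1$ case), so that the determinant is preserved. The key observation is that each of these variable-swaps on the \emph{entries} of the Gram matrix is induced by a genuine geometric symmetry of the surface $F_{0,2}^n$ that permutes the basis $\textbf{B}_n$. For $h_3 = h_1 h_2$ (parts (3) and (4) follow once (1), (2), and the transpose remark are in hand, since $h_t$ was already shown to relate $G_n$ to $^tG_n$), the relevant symmetry is the reflection of the disk-with-two-holes that swaps the ``inside'' and ``outside'' orientations around each of $X_1$ and $Y_1$ — concretely, the involution that reverses the cyclic order of the boundary points $a_0, \ldots, a_{2n-1}$. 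I would first set up this reflection $\rho$ carefully: it is an orientation-reversing homeomorphism of the sphere $D^2 \cup (D^2)^*$ fixing the set $\mathcal{S}$ but swapping $X_1 \leftrightarrow X_1$, $X_2 \leftrightarrow X_2$ while exchanging which side of a curve ``contains'' a given point in a way that on the level of the eight curve-types realizes precisely $x_1 \leftrightarrow y_1$, $x_2 \leftrightarrow y_2$ (and $z_1 \leftrightarrow z_3$, or the relevant pattern). Each such $\rho$ induces a permutation $\sigma$ of $\textbf{B}_n$ via $b_i \mapsto \rho(b_i)$, and one checks $\langle \rho(b_i), \rho(b_j) \rangle = h\bigl(\langle b_i, b_j\rangle\bigr)$ where $h$ is the corresponding variable swap, because $\rho$ carries the diagram $b_i \circ b_j^*$ to $\rho(b_i) \circ \rho(b_j)^*$ and relabels curve types according to $h$. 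This gives $h(G_n) = P_\sigma G_n P_\sigma^{-1}$ (or $P_\sigma G_n P_\tau$ for possibly distinct permutations, still with matching determinant of the conjugating matrices), hence $\det h(G_n) = \det G_n$ for $n > 1$.

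For $h_1$ and $h_2$ individually, the right geometric move is not a reflection of the whole surface but the inversion combined with a half-twist, or equivalently the ``flip'' that exchanges $b_i$ with $b_i^*$ composed with a reflection; I would verify that this sends $z_1 \leftrightarrow z_3$ (the curve separating $\{X_1,X_2\}$ from $\{Y_1,Y_2\}$ versus the one separating $\{X_1,Y_2\}$ from $\{X_2,Y_1\}$) and $x_1 \leftrightarrow y_1$ while fixing $x_2, y_2, d, z_2$, matching the definition of $h_1$. The induced permutation of $\textbf{B}_n$ then yields $\det h_1(G_n) = \pm \det G_n$, and the sign is $+1$ exactly when the induced permutation is even. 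The $n = 1$ exceptional sign is then explained by a direct parity count: $|\textbf{B}_1| = 2 \cdot \binom{2}{1} = 4$, the permutation $\sigma$ on these four states turns out to be an odd permutation (a single transposition or a 4-cycle), whereas for $n > 1$ the larger orbit structure forces it to be even. I would confirm the $n=1$ case by inspecting the explicit $4 \times 4$ matrix $G_1$ displayed above: applying $h_1$ to it and reordering rows/columns back to the original basis ordering should visibly require one transposition.

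The main obstacle I anticipate is \emph{pinning down the induced permutation precisely enough to determine its sign} — in particular, verifying that for every $n > 1$ the permutation $\sigma$ of $\textbf{B}_n$ induced by each geometric symmetry is even. This is a combinatorial fact about how the reflection acts on equivalence classes of diagrams (recall two diagrams are equivalent iff they share an underlying Catalan state and distribute the labeled holes identically across regions), and it likely requires analyzing the fixed points and 2-cycles of $\sigma$: a permutation built from transpositions is even iff it has an even number of them, so I would count the diagrams \emph{not} fixed by $\sigma$ and show this count is divisible by $4$ for $n > 1$ (while equal to $2$ for $n=1$), perhaps by exhibiting a free action of a group of order $4$ on the non-fixed states. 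An alternative, cleaner route that sidesteps the sign bookkeeping is to argue that $\det h(G_n)$ and $\det G_n$ are polynomials that agree up to sign by the conjugation argument, then fix the sign by comparing a single distinguished monomial — e.g. the maximal-degree term $\prod(\text{diagonal})$ in the variables $d, z_1$ identified in the proof of Theorem 3.2, whose coefficient is manifestly $+1$ and is sent by $h_1$ to the coefficient of the $d, z_3$ analogue; if that analogue also has coefficient $+1$ in $\det G_n$ for $n>1$ but coefficient $-1$ for $n=1$, we are done. I would try the conjugation-plus-distinguished-monomial approach first, falling back on the explicit orbit count only if the monomial comparison is ambiguous.
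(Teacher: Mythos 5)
Your overall strategy is the paper's: realize each $h$ as a permutation of basis diagrams (rows for $h_1$, columns for $h_2$, the transpose identity for $h_t$, composition for $h_3$) and then determine the parity of that permutation. But the decisive step --- actually determining the parity for all $n>1$ --- is exactly the point you flag as ``the main obstacle'' and then leave unresolved, and it is the entire content of the proof. The paper settles it by a clean count: $h_1$ is realized by swapping the holes $X_1$ and $Y_1$ inside each $b_i$ (leaving $b_j^*$ untouched, which is why it is a one-sided row permutation and why a sign can appear at all); the fixed rows are precisely the states with $X_1$ and $Y_1$ in the same region, of which there are $\frac{1}{n+1}|\mathbf{B}_n|=\binom{2n}{n}$, so the permutation is a product of $\frac{1}{2}\bigl(|\mathbf{B}_n|-\binom{2n}{n}\bigr)=\frac{n(n+1)}{2}C_n$ transpositions. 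This is even for $n>1$ because either $C_n$ is even, or $C_n$ is odd, which by Deutsch--Sagan forces $n=2^m-1$ and then $\frac{n(n+1)}{2}=2^{m-1}(2^m-1)$ is even for $m>1$; for $n=1$ one gets a single transposition, explaining the sign. Neither of your two proposed resolutions reaches this: a free action of a group of order $4$ on the non-fixed states would prove divisibility by $4$ of the number of non-fixed states, but the relevant quantity $\frac{n}{2}\binom{2n}{n}$ is not obviously even by such an action (the proof genuinely needs the number-theoretic fact about when $C_n$ is odd), and your monomial-comparison fallback requires knowing the coefficient of $d^{\alpha}z_3^{\beta}$ in $\det G_n$, which Theorem 3.1 does not provide (it only controls monomials in $d$ and $z_1$).

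A secondary issue: your geometric realization of $h_1$ as ``the flip that exchanges $b_i$ with $b_i^*$ composed with a reflection'' is off target; exchanging $b_i$ with $b_i^*$ is essentially the transpose and would mix rows and columns. The correct and simpler move is the homeomorphism of the inner disk interchanging the two holes, which sends $x_1\leftrightarrow y_1$ and $z_1\leftrightarrow z_3$ while fixing $d,x_2,y_2,z_2$, and acts on rows only. Your treatment of $h_3$ and $h_t$ (composition and transpose) is fine and matches the paper, including the observation that the two odd permutations compose to an even one at $n=1$.
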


\begin{proof}
For (1), note that $h_1(G_n)$ corresponds to exchanging 
the positions of the holes $X_1$ and $Y_1$ for all $b_i \in \textbf{B}_n$. 
${b_j}^*$ is unchanged, so $h_1$ can be realized by a permutation of rows. 
For states where $X_1$ and $Y_1$ lie in the same region, 
their corresponding rows are unchanged by $h_1$. 
The number of such states is given by $\frac{1}{n+1}|\textbf{B}_n|$. 
Thus, the total number of row transpositions is equal to
$$\frac{1}{2}\left(|\textbf{B}_n| - \left(\frac{1}{n+1}\right)|\textbf{B}_n|\right) = 
\frac{n}{2}{\binom{2n}n} = \left(\frac{n(n+1)}{2}\right)C_n$$
where $C_n = \frac{1}{n+1}{\binom{2n}n}$. It is a known combinatorial fact 
that $C_n$ is odd if and only if $n=2^m-1$ for some $m$, \cite{De}. 
Hence, $C_n$ is odd implies that
$$\frac{n(n+1)}{2} = \frac{2^m(2^m-1)}{2} = 2^{m-1}(2^m-1)$$
which is even for all $m>1$. Thus, $h_1(G_n)$ can be obtained from 
$G_n$ by an even permutation of rows for $n > 1$, so 
$\det h_1(G_n) = \det G_n$. $h_1(G_1)$ is given by an odd number of row 
transpositions on $G_1$, so $\det h_1(G_1) = -\det G_1$. \\

(2) can be shown using the same method of proof as before. $h_2(G_n)$ corresponds to 
exchanging the positions of the holes $X_2$ and $Y_2$ for all $b_i \in \textbf{B}_n$. $h_2$ can thus be realized by a permutation of columns, and the rest of the proof follows in a similar fashion as the previous one. Since $h_2(G_n)$ can be obtained from $G_n$ by an even permutation of columns for $n>1$, $\det h_2(G_n) = \det G_n$. $h_2(G_2)$ is given by an odd number of column transpositions on $G_1$, so $\det h_2(G_1) = -\det G_1$, which proves (2).\\

Since $h_3 = h_1h_2$, it follows immediately that $\det h_3(G_n) = \det G_n$ for $n>1$. The sum of two odd permutations is even, so the equality also holds for $n=1$, which proves (3). (4) follows because $\det h_t(G_n) = \det {}^tG_n = \det G_n$.
\end{proof}
\begin{theorem}
$\det G_n$ is preserved under the following involutions on variables:
\begin{enumerate}
\item $g_1: x_1 \leftrightarrow -x_1, x_2 \leftrightarrow -x_2, z_2 \leftrightarrow -z_2, z_3 \leftrightarrow -z_3$
\item $g_2: y_1 \leftrightarrow -y_1, y_2 \leftrightarrow -y_2, z_2 \leftrightarrow -z_2, z_3 \leftrightarrow -z_3$
\item $g_3: x_1 \leftrightarrow -x_1, y_2 \leftrightarrow -y_2, z_1 \leftrightarrow -z_1, z_2 \leftrightarrow -z_2$
\item $g_1g_2: x_1 \leftrightarrow -x_1, x_2 \leftrightarrow -x_2, y_1 \leftrightarrow -y_1, y_2 \leftrightarrow -y_2$
\item $g_1g_3: x_2 \leftrightarrow -x_2, y_2 \leftrightarrow -y_2, z_1 \leftrightarrow -z_1, z_3 \leftrightarrow -z_3$
\item $g_2g_3: x_1 \leftrightarrow -x_1, y_1 \leftrightarrow -y_1, z_1 \leftrightarrow -z_1, z_3 \leftrightarrow -z_3$
\item $g_1g_2g_3: x_2 \leftrightarrow -x_2, y_1 \leftrightarrow -y_1, z_1 \leftrightarrow -z_1, z_2 \leftrightarrow -z_2$
\end{enumerate}
\end{theorem}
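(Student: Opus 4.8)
The plan is to realize every involution $g$ from the list as a two-sided operation $G_n\mapsto P\,G_n\,Q$ by diagonal matrices $P,Q$ with entries in $\{\pm1\}$ and $\det P=\det Q$; then $\det g(G_n)=\det P\cdot\det Q\cdot\det G_n=(\det P)^2\det G_n=\det G_n$. This property is stable under composition: if $g(G_n)=P\,G_n\,Q$ and $g'(G_n)=P'\,G_n\,Q'$, then (as $P',Q'$ are numerical) $(gg')(G_n)=g\big(g'(G_n)\big)=P'\,g(G_n)\,Q'=(P'P)\,G_n\,(QQ')$, with $\det(P'P)=\det P'\det P=\det Q'\det Q=\det(QQ')$. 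Since the seven listed involutions are exactly the nonidentity elements of $\langle g_1,g_2,g_3\rangle\cong(\mathbb Z/2)^3$ (the three generators being independent commuting involutions), it is enough to treat $g_1,g_2,g_3$.

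The first step reads off the effect of each generator on a pairing monomial. Checking the eight partitions of $\mathcal S=\{X_1,X_2,Y_1,Y_2\}$, one finds that the four variables negated by $g_1$ — namely $x_1,x_2,z_2,z_3$ — index precisely the closed curves of $b_i\circ b_j^*$ that separate $X_1$ from $X_2$; likewise $g_2$ negates exactly the curve types separating $Y_1$ from $Y_2$, and $g_3$ exactly those separating $X_1$ from $Y_2$. Hence, fixing a properly embedded arc $\delta$ in the four-holed sphere $D^2\cup(D^2)^*$ that joins the relevant pair of holes and is transverse to $b_i\circ b_j^*$, and using that a simple closed curve in $S^2$ crosses such an arc an odd number of times exactly when it separates the two endpoints of $\delta$, we get
$$g(\langle b_i,b_j\rangle)=(-1)^{\,\#(\delta\cap(b_i\circ b_j^*))}\,\langle b_i,b_j\rangle .$$

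The second step converts this into the matrix identity. Cutting $\delta$ along the outer circle $\partial D^2$ and pushing the part of $\delta$ lying in $(D^2)^*$ back into $D^2$ by the inversion $\iota\colon z\mapsto z/|z|^2$ (which fixes $\partial D^2$ pointwise and satisfies $\iota(b_j^*)=b_j$) gives, modulo $2$, $\#(\delta\cap(b_i\circ b_j^*))=\#(\alpha\cap b_i)+\#(\beta\cap b_j)$ for fixed arcs $\alpha,\beta\subset D^2$, each running from a hole to $\partial D^2$, and therefore
$$g(G_n)=P\,G_n\,Q,\qquad P=\operatorname{diag}\big((-1)^{\#(\alpha\cap b_i)}\big),\quad Q=\operatorname{diag}\big((-1)^{\#(\beta\cap b_j)}\big).$$
For $g_1$ the holes $X_1,X_2$ being joined are interchanged by $\iota$, so $\delta$ may be chosen $\iota$-invariant; then $\alpha=\beta$ and $P=Q$. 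The same argument works for $g_2$ using $Y_1,Y_2$. So $g_1$ and $g_2$ satisfy the required property with $P=Q$.

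The case $g_3$, which joins $X_1\subset D^2$ to $Y_2\subset(D^2)^*$, is the only one needing a further ingredient, since no $\iota$-invariant $\delta$ exists and instead one must show $\det P=\det Q$. Here I would invoke the Euclidean reflection $\rho$ of the sphere — with the two holes placed symmetrically about its axis — that interchanges $X_1\leftrightarrow Y_1$ and $X_2\leftrightarrow Y_2$; it preserves $\partial D^2$ setwise, it commutes with $\iota$, and (placing the $2n$ marked points symmetrically about the axis, harmless since $\textbf{B}_n$ depends only on their cyclic order) it induces a bijection of $\textbf{B}_n$. Choosing $\delta$ invariant under the involution $\rho\iota$, a short computation using $\iota\rho=\rho\iota$ yields $\beta=\rho(\alpha)$, so $\sum_j\#(\beta\cap b_j)=\sum_j\#(\alpha\cap\rho(b_j))=\sum_i\#(\alpha\cap b_i)$ modulo $2$, i.e.\ $\det Q=\det P$. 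This completes $g_3$ and hence the theorem. I expect the only delicate part to be the geometric bookkeeping around $\rho$ — that it genuinely preserves $\textbf{B}_n$ and commutes with $\iota$, and that a $\rho\iota$-invariant arc $\delta$ meeting $\partial D^2$ once and missing the marked points can be chosen; the partition inspection and the inversion computation are routine.
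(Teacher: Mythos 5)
Your proposal follows essentially the same route as the paper: each generator $g_k$ is realized by multiplying $G_n$ on both sides by diagonal $\pm 1$ matrices whose entries are mod-$2$ intersection numbers of the diagrams with a fixed arc joining the two holes that the relevant curve types separate ($X_1$ to $X_2$ for $g_1$, $Y_1$ to $Y_2$ for $g_2$, $X_1$ to $Y_2$ for $g_3$), and the composite cases (4)--(7) follow formally. The one place you go beyond the paper is $g_3$: the paper simply says the same method of conjugation applies, now using the arcs $\tilde F_x$ and $\tilde F_x\cup {F_y}^*$, whereas you correctly observe that here the row and column diagonal matrices $P$ and $Q$ are genuinely different (no $\iota$-invariant arc from $X_1$ to $Y_2$ exists), so that one gets $P\,G_n\,Q$ rather than a conjugation and must separately verify $\det P=\det Q$; your reflection argument --- $\rho$ permutes $\textbf{B}_n$ and carries $\alpha$ to $\beta$, so the two exponent sums agree mod $2$ --- supplies exactly the step the paper glosses over. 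Both your argument and the paper's rest on the (true but unstated) fact that the mod-$2$ intersection number of a diagram with a fixed arc from a hole to $\partial D^2$ depends only on the equivalence class of the diagram in $\textbf{B}_n$, not on the chosen representative.
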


\begin{proof}
To prove (1), we show that $g_1$ can be realized by conjugating the matrix $G_n$ by a 
diagonal matrix $P_n$ of all diagonal entries equal to $\pm 1$. The diagonal entries of $P_n$ are defined as
$$p_{ii} = (-1)^{q(b_i,F_x)}$$
where $q(b_i,F_x)$ is the number of times $b_i$ intersects $F_x$ modulo $2$, cf. Figure \ref{fig8}. The theorem follows 
because curves corresponding to the variables $x_1$, $x_2$, $z_2$ and $z_3$ 
intersect $F_x\cup {F_x}^*$ in an odd number of points, whereas curves 
corresponding to the variables $d$, $z_2$, $y_1$ and $y_2$ cut 
it an even number of times.

\begin{figure}[htbp]
\begin{center}
\includegraphics[scale=1]{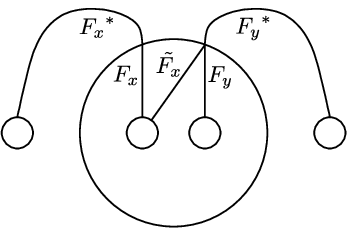} 
\caption{}
\label{fig8}
\end{center}
\end{figure}

More precisely, for
$$g_{ij}=\langle b_i,b_j \rangle = d^{n_d}{x_1}^{n_{x_1}}{x_2}^{n_{x_2}}{y_1}^{n_{y_1}}{y_2}^{n_{y_2}}{z_1}^{n_{z_1}}{z_2}^{n_{z_2}}{z_3}^{n_{z_3}},$$
the entry $g'_{ij}$ of $P_nG_n{P_n}^{-1}$ satisfies:
\begin{eqnarray*}
g'_{ij} &= &p_{ii}g_{ij}p_{jj}
\\&= &p_{ii}p_{jj}g_{ij}
\\&= &(-1)^{q(b_i,F_x)+ q(b_j,F_x)}g_{ij}
\\&= &(-1)^{n_{x_1}+n_{x_2} + n_{z_2} + n_{z_3}}g_{ij}
\\&= &d^{n_d}{(-x_1)}^{n_{x_1}}{(-x_2)}^{n_{x_2}}{y_1}^{n_{y_1}}{y_2}^{n_{y_2}}{z_1}^{n_{z_1}}{(-z_2)}^{n_{z_2}}{(-z_3)}^{n_{z_3}}
\end{eqnarray*}

For (2) and (3), we use the same method of proof as for (1). In (2), we use $F_y$ and $F_y\cup {F_y}^*$. In (3), we use $\tilde F_x$ and $\tilde F_x\cup  {F_y}^*$. (4) through (7) follow directly from (1), (2) and (3).
\end{proof}

\section{Terms of Maximal Degree in $\det G_n$}
Theorem 3.1 proves that the product of the diagonal entries of $G_n$ is the unique term of maximal degree, $n(n+1){\binom{2n}n}$, in $\det G_n$ consisting only of the variables $d$ and $z_1$. More precisely, the product of the diagonal of $G_n$ is given by
$$\delta(n) = \prod_{b_i \in \textbf{B}_n} \langle b_i, b_i \rangle = d^{\alpha(n)}z_1^{\beta(n)}$$ with $\alpha(n)+\beta(n) = n(n+1){\binom{2n}n}$.
$\delta(n)$ for the first few $n$ are given here:
$$\delta(1) = d^2z_1^2 \qquad \delta(2) = d^{20}z_1^{16} \qquad \delta(3)=d^{144}z_1^{96} \qquad \delta(4)=d^{888}z_1^{512}$$

Computing the general formula for $\delta(n)$ can be reduced to a purely 
combinatorial problem. We conjectured that $\beta(n) = (2n)4^{n-1}$ 
and this was in fact proven by Louis Shapiro using an involved generating 
function argument \cite{Sha}. The result is stated formally below.\\

\begin{theorem}
$$\delta(n)= d^{n(n+1){2n \choose n}-(2n)4^{n-1}}z_1^{(2n)4^{n-1}}$$
\end{theorem}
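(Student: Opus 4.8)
The plan is to reduce the computation of $\beta(n)$ to a self-contained counting problem about Catalan states and their ``multiplicities'' as diagrams in $F_{0,2}^n$. By Theorem 3.1, every diagonal entry $\langle b_i, b_i\rangle$ is a monomial of maximal degree $n$ in the variables $d$ and $z_1$ only, so $\beta(n) = \sum_{b_i \in \textbf{B}_n} n_{z_1}(b_i \circ b_i^*)$, where $n_{z_1}(b_i \circ b_i^*)$ counts the closed curves in $b_i \circ b_i^*$ that separate $\{X_1, X_2\}$ from $\{Y_1, Y_2\}$. The first step is to describe $n_{z_1}(b_i \circ b_i^*)$ combinatorially: gluing $b_i$ to its own inversion $b_i^*$ along the outer boundary produces, for each chord of the underlying Catalan state $\gamma(b_i)$, one closed curve; that curve is of ``$z_1$-type'' precisely when the corresponding chord of $\gamma(b_i)$ separates the two holes $X_1$ and $Y_1$ in $b_i$ (equivalently, exactly one hole lies on each side of the chord, as positioned in the diagram $b_i$). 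So $\beta(n)$ counts pairs (Catalan state $c$, a way of placing the two labeled holes into the $n+1$ regions of $c$, with a chord weight counting how many chords of $c$ ``see'' the two holes on opposite sides).

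Next I would set up the generating-function bookkeeping. For a fixed Catalan state $c \in \textbf{B}_{n,0}$, placing the holes $X_1$ and $Y_1$ in regions $P$ and $Q$ (not necessarily distinct) contributes, to $\beta(n)$, the number of chords of $c$ that lie ``between'' $P$ and $Q$ in the plane tree / dual-tree sense. Summing over all placements of the two holes, the contribution of $c$ becomes $\sum_{P,Q} (\text{number of chords separating } P \text{ from } Q) = \sum_{e \in \text{chords}(c)} (\text{number of regions on one side of } e)\cdot(\text{number of regions on the other side})$. Since removing a chord $e$ from $c$ splits the $n+1$ regions into a group of size $j+1$ (if the chord cuts off a sub-Catalan-state with $j$ chords) and $n-j$, this contribution is $\sum_{e}(j_e+1)(n-j_e)$. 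Thus $\beta(n) = \sum_{c \in \textbf{B}_{n,0}} \sum_{e \in c} (j_e+1)(n - j_e)$, which can be rewritten via the standard ``chord cuts a Catalan state into two'' recursion: a chord $e$ with a $j$-chord state on its ``inside'' and an $(n-1-j)$-chord state (together with the outer structure) on its ``outside''. Counting the number of $(c,e)$ pairs of each type leads to a convolution identity in the Catalan numbers, and one expects the generating function $B(t) = \sum_n \beta(n) t^n$ to satisfy an algebraic equation derived from $C(t) = \sum C_n t^n = \frac{1-\sqrt{1-4t}}{2t}$. The target identity $\beta(n) = 2n \cdot 4^{n-1}$ means $B(t) = \sum_n 2n\,4^{n-1} t^n = \frac{2t}{(1-4t)^2}$, and I would verify that the convolution identity produced above is exactly satisfied by this closed form (this is the step Shapiro carried out with generating functions; I would follow the same route).

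**The hard part will be** the middle step: correctly translating ``$b_i$ glued to its own inversion, with holes tracked'' into the clean weighted sum over chords, and then massaging the resulting double sum $\sum_c \sum_e (j_e+1)(n-j_e)$ into a form whose generating function is manifestly $\frac{2t}{(1-4t)^2}$. The combinatorial subtlety is that a chord of a Catalan state is not merely a ``split into $j$ and $n-1-j$'': the outer part of the state is itself a Catalan state with a marked boundary arc, so the enumeration of $(c,e)$ pairs by $(j, n-1-j)$ involves the number of Catalan states of size $n-1-j$ with a distinguished position, i.e.\ it introduces factors like $(n-j)C_{n-1-j}$ or ballot-type refinements. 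Handling these refined counts—equivalently, differentiating the functional equation $C = 1 + tC^2$ and extracting $\sum_n (\text{something}) t^n$—is where the generating-function argument becomes ``involved'' in Shapiro's phrasing, and I would expect to spend most of the effort there; once the functional equation for $B(t)$ is pinned down, checking it against $\frac{2t}{(1-4t)^2}$ is routine, and the $d$-exponent then follows immediately from $\alpha(n) = n(n+1)\binom{2n}{n} - \beta(n)$.
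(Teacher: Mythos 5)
Your reduction is sound and is essentially the one the paper itself describes: on the diagonal, each chord of $b_i$ is glued to its own inversion, so $\langle b_i,b_i\rangle$ is a degree-$n$ monomial in $d$ and $z_1$ alone, a curve being of type $z_1$ exactly when the underlying chord separates $X_1$ from $Y_1$ in $b_i$; hence $\beta(n)$ becomes a weighted count over Catalan states with two labeled holes distributed among the $n+1$ regions. One bookkeeping slip, though: because the holes are labeled, the placements range over \emph{ordered} pairs of regions $(P,Q)$, so a chord $e$ with $j_e+1$ regions on one side and $n-j_e$ on the other separates $2(j_e+1)(n-j_e)$ of these placements, not $(j_e+1)(n-j_e)$. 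As written, your formula $\beta(n)=\sum_c\sum_e(j_e+1)(n-j_e)$ yields $n\cdot4^{n-1}$ (check $n=1$: it gives $1$, whereas $\delta(1)=d^2z_1^2$ forces $\beta(1)=2$); the correct statement is $\beta(n)=2\sum_c\sum_e(j_e+1)(n-j_e)$, and had you pushed your own consistency check against $B(t)=2t/(1-4t)^2$ you would have caught this.

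The more serious issue is that the identity $\sum_c\sum_e(j_e+1)(n-j_e)=n\cdot4^{n-1}$ \emph{is} the theorem, and you do not prove it: you only say you ``would verify'' a convolution identity against the closed form and ``would follow the same route'' as Shapiro. Note that the paper itself contains no proof either --- it records that the authors conjectured $\beta(n)=(2n)4^{n-1}$ and that Shapiro established it by an involved generating-function argument cited only as a personal communication --- so there is no argument in the text for your sketch to lean on. To close the gap you would need to actually derive and solve the functional equation, e.g.\ by recognizing $\sum_e(j_e+1)(n-j_e)$ as the Wiener index of the plane tree dual to the Catalan state and proving that the total Wiener index over plane trees with $n$ edges is $n\,4^{n-1}$, or by carrying through the Catalan convolution with the marked-position (ballot-type) factors you correctly anticipate. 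As it stands the proposal is a correct reduction and a plausible plan, but not a proof.
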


Let $h(\det G_n)$ denote the truncation of $\det G_n$ to terms of maximal degree, that is, of degree $n(n+1){\binom{2n}n}$. Each term is a product of $(n+1){\binom{2n}n}$ entries in $G_n$, each of which is a monomial of degree $n$. By Lemma 3.1, $\langle b_i, b_j \rangle$ has degree $n$ if and only if $b_i$ and $b_j$ have the same underlying Catalan state. There are $C_n = \frac{1}{n+1}{\binom{2n}n}$ elements in $\textbf{B}_{n,0}$. Divide $\textbf{B}_n$ into subsets corresponding to underlying Catalan states, that is, into subsets $A_1, \ldots, A_{C_n}$, such that for all $b_i, b_j \in A_k$, $\gamma(b_i) = \gamma(b_j)$. Then from Lemma 3.1 we have
\begin{proposition}
$$h(\det G_n) = \prod_{k=1}^{C_n}\det \langle A_k, A_k \rangle$$
\end{proposition}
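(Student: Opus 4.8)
The plan is to expand $\det G_n$ by the Leibniz formula and identify exactly which permutations contribute to the top-degree part. Write $N = (n+1)\binom{2n}{n} = |\textbf{B}_n|$, so that
\[
\det G_n = \sum_{\sigma \in S_N} \operatorname{sgn}(\sigma) \prod_{i=1}^{N} \langle b_i, b_{\sigma(i)} \rangle .
\]
Each factor $\langle b_i, b_{\sigma(i)} \rangle$ is a monomial of degree at most $n$, so every summand has degree at most $nN$, and a summand attains degree exactly $nN$ if and only if each of its $N$ factors has degree exactly $n$. By Lemma 3.1 this happens precisely when $\gamma(b_i) = \gamma(b_{\sigma(i)})$ for every $i$.

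Next I would translate this degree condition into a structural statement about $\sigma$. Identify each $A_k$ with its set of indices; since $A_1, \dots, A_{C_n}$ are exactly the fibers of the map $b_i \mapsto \gamma(b_i)$, the requirement $\gamma(b_i) = \gamma(b_{\sigma(i)})$ for all $i$ is equivalent to $\sigma(A_k) = A_k$ for all $k$. Such a $\sigma$ decomposes uniquely as a product $\sigma = \sigma_1 \cdots \sigma_{C_n}$ of permutations $\sigma_k$ supported on the pairwise disjoint sets $A_k$, with $\operatorname{sgn}(\sigma) = \prod_k \operatorname{sgn}(\sigma_k)$. Restricting the Leibniz sum to these $\sigma$ and grouping the factors block by block gives
\[
h(\det G_n) = \sum_{\substack{\sigma_k \in S(A_k)\\ 1 \le k \le C_n}} \;\prod_{k=1}^{C_n} \operatorname{sgn}(\sigma_k)\!\! \prod_{b_i \in A_k}\!\! \langle b_i, b_{\sigma_k(i)} \rangle \;=\; \prod_{k=1}^{C_n} \left( \sum_{\sigma_k \in S(A_k)} \operatorname{sgn}(\sigma_k)\!\! \prod_{b_i \in A_k}\!\! \langle b_i, b_{\sigma_k(i)} \rangle \right),
\]
and the inner sum over $\sigma_k$ is by definition $\det \langle A_k, A_k \rangle$.

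The one point worth checking to make the last factorization legitimate is that passing to the top-degree part commutes with the product. By Lemma 3.1 every entry of the submatrix $\langle A_k, A_k \rangle$ is a monomial of degree exactly $n$ (all elements of $A_k$ share an underlying Catalan state), so $\det\langle A_k, A_k \rangle$ is homogeneous of degree $n|A_k|$; since $\sum_k |A_k| = N$, the product $\prod_k \det\langle A_k, A_k\rangle$ is already homogeneous of degree $nN$, so nothing is truncated away when forming it. (Theorem 3.1 shows this common top degree is actually realized, e.g. by the diagonal product, so $h(\det G_n)\ne 0$.) I do not anticipate a genuine obstacle: the only mild subtlety is the bijection between top-degree-contributing permutations and tuples $(\sigma_1,\dots,\sigma_{C_n})$ of within-block permutations together with multiplicativity of the sign — this is just the standard block-diagonal determinant identity, applied after reordering $\textbf{B}_n$ so that each $A_k$ occupies a consecutive block of indices.
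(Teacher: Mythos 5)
Your proof is correct and follows essentially the same route as the paper, which derives the proposition directly from Lemma 3.1 (an entry has degree $n$ exactly when the two states share an underlying Catalan state) without writing out the Leibniz-expansion details. Your write-up simply makes explicit the standard bookkeeping the paper leaves implicit: block-preserving permutations, multiplicativity of sign, and the observation that each $\det\langle A_k, A_k\rangle$ is already homogeneous of degree $n|A_k|$.
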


Note that $\langle A_k, A_k \rangle$ are simply blocks in $G_n$ whose determinants can be multiplied together to give the highest terms in $\det G_n$. Finding the terms of maximal degree in $\det G_n$ can give insight into decomposition of $\det G_n$ for large $n$.

\begin{example}
$\textbf{B}_1$ corresponds to the single Catalan state in $\textbf{B}_{1,0}$. Thus, $\det G_1 = h(\det G_1)$, a homogeneous polynomial of degree 4.
\end{example}

\begin{example}
$\textbf{B}_2$ can be divided into two subsets, corresponding to the two Catalan states in $\textbf{B}_{2,0}$. We can thus find $h(\det G_2)$ by computing two $9 \times 9$ block determinants. The two Catalan states in $\textbf{B}_{2,0}$ are equivalent up to rotation, so the two block determinants are equal. Specifically, we have:
\begin{eqnarray*}
h(\det G_2) &= &d^6 (x_1 x_2+x_2 y_1+x_1 y_2+y_1 y_2-d z_1-z_1 z_2-d z_3-z_2 z_3)^4\\
&&(-x_1 x_2+x_2 y_1+x_1 y_2-y_1 y_2+d z_1-z_1 z_2-d z_3+z_2 z_3)^4\\
&&(-x_1 x_2 z_1-y_1 y_2 z_1+{d z_1}^2+x_2 y_1 z_3+x_1 y_2 z_3-{d z_3}^2)^2\\
&&(-2 x_1 x_2 y_1 y_2+d x_1 x_2 z_1+d y_1 y_2 z_1-d^2 {z_1}^2+d x_2 y_1
z_3+d x_1 y_2 z_3-d^2 {z_3}^2)^2\\
&= &d^6 {\det G_1}^4(-x_1 x_2 z_1-y_1 y_2 z_1+{d z_1}^2+x_2 y_1 z_3+x_1 y_2 z_3-{d z_3}^2)^2\\
&&(-2 x_1 x_2 y_1 y_2+d x_1 x_2 z_1+d y_1 y_2 z_1-d^2 {z_1}^2+d x_2 y_1
z_3+d x_1 y_2 z_3-d^2 {z_3}^2)^2
\end{eqnarray*}
\end{example}

\begin{example}
$\textbf{B}_3$ can be divided into five subsets, corresponding to the five Catalan states in $\textbf{B}_{3,0}$. We can thus find $h(\det G_3)$ by computing the determinants of five blocks in $\textbf{B}_3$. The determinant of each block gives a homogeneous polynomial of degree $240/5 = 48$. $\textbf{B}_{3,0}$ forms two equivalence classes up to rotation, so there are only two unique block determinants. For precise terms, we refer the reader to the Appendix.
\end{example}

\section{$\det G_{n-1}$ Divides $\det G_n$}
In this section, we prove that the Gram determinant for $n-1$ chords divides the Gram determinant for $n$ chords. We need several lemmas:

\begin{lemma}
For any $b_i \in \textbf{B}_n$, $p_0(b_i) \in \textbf{B}_{n-1}$ if and only if $b_i$ contains no chord connecting $a_0$ and $a_{2n-1}$.
\end{lemma}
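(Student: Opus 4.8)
The statement concerns $p_0(b_i)$, the diagram obtained by attaching to $b_i$ a chord joining $a_0$ and $a_{2n-1}$ \emph{outside} the disk and then pushing it in. The key observation is that this operation always produces a diagram on $2n-2$ boundary points (the two points $a_0, a_{2n-1}$ get absorbed), but the number of chords may fail to drop to $n-1$: if the pushed-in chord closes up with an existing chord of $b_i$ to form a closed loop, we lose that loop, and the number of chords drops by one too many (or, depending on bookkeeping, a spurious closed curve appears rather than a genuine element of $\textbf{B}_{n-1}$). So the plan is to analyze exactly when the new chord creates a closed curve.

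First I would set up the dichotomy at the points $a_0$ and $a_{2n-1}$: in $b_i$, each of these two points is an endpoint of exactly one chord. Pushing the external arc between $a_0$ and $a_{2n-1}$ inside and then isotoping, the external arc merges its two endpoints with the two chord-ends sitting at $a_0$ and $a_{2n-1}$. The resulting picture has $n-1$ chords plus possibly one closed curve precisely when those two chord-ends in $b_i$ belong to the \emph{same} chord of $b_i$ — i.e. when $b_i$ has a chord directly connecting $a_0$ and $a_{2n-1}$. In that case the external arc plus that chord form a closed loop, which is not a chord of a diagram in $\textbf{B}_{n-1}$, so $p_0(b_i) \notin \textbf{B}_{n-1}$ (it lives in a larger setting with a free-floating loop, or is not well-defined as an element of $\textbf{B}_{n-1}$). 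Conversely, if no chord of $b_i$ joins $a_0$ to $a_{2n-1}$, the two chord-ends at $a_0$ and $a_{2n-1}$ belong to two distinct chords; the external arc then splices these two chords into a single chord, reducing the chord count from $n$ to $n-1$, and the result is a genuine non-crossing diagram in $F_{0,2}^{n-1}$, hence in $\textbf{B}_{n-1}$.

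The two remaining things to check are that, in the favorable case, (a) the spliced diagram still has non-crossing chords, and (b) it is a legitimate representative of an equivalence class in $\textbf{B}_{n-1}$ (same underlying Catalan state structure plus hole distribution). Point (a) is immediate since the external arc is drawn in a neighborhood of the boundary away from the holes and from all other chords, so splicing two chords along it cannot introduce a crossing. Point (b) follows because pushing the arc inside only changes the diagram near the outer boundary and does not move the holes $X_1, Y_1$ between regions; one should just note that the new chord, running close to the boundary, leaves the hole distribution among the $n$ regions unchanged (two adjacent regions of $b_i$ merge, consistent with going from $n+1$ to $n$ regions).

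I expect the main obstacle — really the only subtle point — to be stating cleanly what ``$p_0(b_i) \in \textbf{B}_{n-1}$'' means in the unfavorable case, i.e.\ making precise that when $b_i$ has a chord from $a_0$ to $a_{2n-1}$ the output has $n-1$ chords plus an extra trivial closed curve and therefore is \emph{not} an element of $\textbf{B}_{n-1}$ (which by definition consists of diagrams with exactly $n-1$ chords and no closed curves). Once that convention is fixed, the argument is the short case analysis above, and it can be made rigorous by a picture showing the local modification near $a_0$ and $a_{2n-1}$ in the two cases.
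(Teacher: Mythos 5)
Your proposal is correct and follows essentially the same case analysis as the paper's proof: when $a_0$ and $a_{2n-1}$ are joined to distinct points, the pushed-in arc splices two chords into one path that can be isotoped inside the circle, and when they are joined to each other, the arc closes up into a loop so the result is not in $\textbf{B}_{n-1}$. Your extra remarks on non-crossing and on the equivalence class being well defined are fine but not needed beyond what the paper's pictorial argument already conveys.
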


\begin{proof}
Suppose $a_0$ and $a_{2n-1}$ are not connected by a chord in $b_i$, 
say, $a_0$ is connected to $a_j$ and $a_{2n-1}$ is connected to $a_k$.
 Then $p_0(b_i)$ connects $a_0$ and $a_{2n-1}$ by a chord outside the outer 
boundary, and this chord does not form a closed curve. Because $a_{j}$ is 
connected to $a_0$ and $a_{k}$ is connected to $a_{2n-1}$, $p_0(b_i)$ 
contains single path from $a_{k}$ to $a_{j}$, which we can deform through 
isotopy so that it fits inside the outer circle. 
Thus, $p_0(b_i) \in \textbf{B}_{n-1}$, cf. Figure \ref{fig9}.

\begin{figure}[htbp]
\begin{center}
\includegraphics{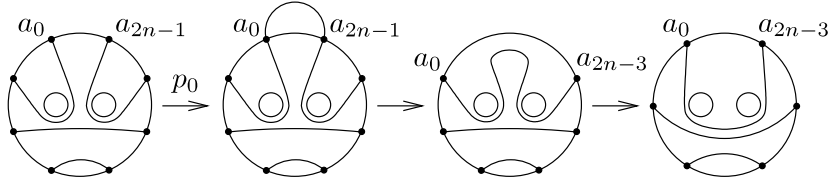}
\end{center}
\caption{From $b_i$, we obtain $p_0(b_i)$ by adjoining a chord outside the outer boundary between $a_0$ and $a_{2n-1}$, and pushing the chord inside the boundary. If $b_i$ does not contain a chord connecting $a_0$ and $a_{2n-1}$, then $p_0(b_i) \in \textbf{B}_{n-1}$.}
\label{fig9}
\end{figure}

If $b_i$ contains an arc connecting $a_0$ and $a_{2n-1}$, then $p_0(b_i)$ contains a closed curve enclosing some subset of $\{X_1, Y_1\}$, and cannot be in $\textbf{B}_n$.
\end{proof}

\begin{lemma}
For any $b_i \in \textbf{B}_n$, if $p_0(b_i) \notin \textbf{B}_{n-1}$, there exists $b_{\alpha(i)} \in \textbf{B}_{n-1}$ such that, for all $b_j \in \textbf{B}_{n-1}$, one of the following is true:
\begin{enumerate}
\item $\langle p_0(b_i), b_j\rangle = d\langle b_{\alpha(i)}, b_j \rangle$
\item $\langle p_0(b_i), b_j\rangle = x_1\langle b_{\alpha(i)}, b_j \rangle$
\item $\langle p_0(b_i), b_j\rangle = y_1\langle b_{\alpha(i)}, b_j \rangle$
\item $\langle p_0(b_i), b_j\rangle = z_2\langle b_{\alpha(i)}, b_j \rangle$.
\end{enumerate}
\end{lemma}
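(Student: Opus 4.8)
The plan is to analyze the geometry of $p_0(b_i)$ when it fails to lie in $\textbf{B}_{n-1}$. By Lemma 4.1, this happens precisely when $b_i$ contains a chord connecting $a_0$ and $a_{2n-1}$. Adjoining the outer chord between $a_0$ and $a_{2n-1}$ and pushing it inside then creates one closed curve $C$, and the remaining $n-1$ chords of $b_i$ are untouched. So $p_0(b_i)$ is, as a diagram, the disjoint union of a genuine element $b_{\alpha(i)} \in \textbf{B}_{n-1}$ (the $n-1$ surviving chords, which connect the $2n-2$ points $a_1, \ldots, a_{2n-2}$) together with the single closed curve $C$ living near the boundary. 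The definition of $b_{\alpha(i)}$ is forced: it is $b_i$ with the $a_0a_{2n-1}$-chord replaced by the boundary-parallel closing arc, reindexed.

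Next I would track what $C$ encloses. Since $C$ is formed from the $a_0a_{2n-1}$-chord of $b_i$ (which separates some subset of $\{X_1, Y_1\}$ from the outer boundary) together with a small boundary-parallel arc, $C$ is a simple closed curve in $F_{0,2}$ bounding a disk that meets the holes in one of exactly four ways: it encloses neither $X_1$ nor $Y_1$; it encloses only $X_1$; it encloses only $Y_1$; or it encloses both. These four cases are exactly the four cases in the statement. Now fix any $b_j \in \textbf{B}_{n-1}$ and form $p_0(b_i) \circ {b_j}^*$. Because $C$ is disjoint from all the chords of $b_{\alpha(i)}$ and sits near the outer boundary (away from the $2n-2$ glued points), it contributes exactly one extra closed curve in $p_0(b_i) \circ {b_j}^* = \big(b_{\alpha(i)} \sqcup C\big) \circ {b_j}^*$, and that curve plus its mirror image under inversion — actually $C$ together with the outer closing means the inversion picture must be checked: $C$ already lies strictly inside $D^2$, so upon gluing with ${b_j}^*$ along $\partial D^2$ the curve $C$ survives as a single closed curve, unaffected by the mirror. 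The type of this curve with respect to $\mathcal{S} = \{X_1, X_2, Y_1, Y_2\}$ is: nothing enclosed gives type $d$; $X_1$ alone gives type $x_1$; $Y_1$ alone gives type $y_1$; and $\{X_1, Y_1\}$ enclosed gives a curve separating $\{X_1, Y_1\}$ from $\{X_2, Y_2\}$, i.e. type $z_2$. All other closed curves of $p_0(b_i) \circ {b_j}^*$ coincide with those of $b_{\alpha(i)} \circ {b_j}^*$ and hence $\langle p_0(b_i), b_j\rangle$ equals $\langle b_{\alpha(i)}, b_j\rangle$ times the single extra monomial factor $d$, $x_1$, $y_1$, or $z_2$, which is exactly the claim.

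The main obstacle I anticipate is the bookkeeping around the inversion/mirror construction: one must be careful that $C$, which is built using an arc pushed to just inside $\partial D^2$, really does remain a single closed curve in the glued sphere $D^2 \cup (D^2)^*$ rather than interacting with ${b_j}^*$ or pairing with its own mirror. The key point making this work is that $C$ can be isotoped to lie entirely in the interior of $D^2$ (Lemma 4.1's proof already does exactly this isotopy) and strictly separated from the $2n-2$ gluing points, so it is disjoint from everything coming from ${b_j}^*$; hence it contributes one closed curve whose $\mathcal{S}$-type depends only on which of $X_1, Y_1$ it encloses. A secondary subtlety is verifying that the remaining $n-1$ arcs of $b_i$, after deleting the $a_0a_{2n-1}$-chord, genuinely assemble into a well-defined element $b_{\alpha(i)}$ of $\textbf{B}_{n-1}$ independent of the isotopy representative of $b_i$ — this follows from the equivalence definition in Section 1, since filling the holes commutes with deleting a boundary chord.
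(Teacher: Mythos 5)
Your proposal is correct and follows essentially the same route as the paper's (much terser) proof: decompose $p_0(b_i)$ into a diagram $b_{\alpha(i)} \in \textbf{B}_{n-1}$ plus one closed curve enclosing a subset of $\{X_1, Y_1\}$, and match the four enclosure cases to $d$, $x_1$, $y_1$, $z_2$. Your additional checks --- that the closed curve survives the gluing with ${b_j}^*$ untouched by the inversion and that $z_2$ is the correct type when both $X_1$ and $Y_1$ are enclosed --- are exactly the details the paper leaves implicit.
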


\begin{proof}
By Lemma 5.1, $b_i$ contains a chord connecting points $a_0$ and $a_{2n-1}$, so $p_0(b_i)$ must consist of some diagram in $\textbf{B}_{n-1}$ and a closed curve enclosing some subset of $\{X_1, Y_1\}$. The former is given by $\langle b_{\alpha(i)}, b_j \rangle$ for some $b_{\alpha(i)} \in \textbf{B}_{n-1}$, and the latter curve is given by one of the following variables: $d, x_1, y_1, z_2$.
\end{proof}

The previous two lemmas, combined with Proposition 2.1, leads to the following corollary.
\begin{corollary}
Let $\mathcal{A} = \{1, d, x_1, y_1, z_2\}$. For any $b_i \in \textbf{B}_n$, there exists $b_{\alpha(i)} \in \textbf{B}_{n-1}$ and $c \in \mathcal{A}$ such that $\langle b_i, i_0(\textbf{B}_{n-1}) \rangle = c\langle b_{\alpha(i)}, i_0(\textbf{B}_{n-1}) \rangle$.
\end{corollary}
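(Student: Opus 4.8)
The plan is to combine the three preceding results directly; the corollary is essentially a packaging statement. First I would fix $b_i \in \textbf{B}_n$ and dispose of the easy case. By Lemma 5.1, if $b_i$ contains no chord connecting $a_0$ and $a_{2n-1}$, then $p_0(b_i) \in \textbf{B}_{n-1}$; in this case set $b_{\alpha(i)} = p_0(b_i)$ and $c = 1 \in \mathcal{A}$, and Proposition 2.1 gives $\langle b_i, i_0(b_j)\rangle = \langle p_0(b_i), b_j\rangle = \langle b_{\alpha(i)}, b_j\rangle$ for every $b_j \in \textbf{B}_{n-1}$, hence $\langle b_i, i_0(\textbf{B}_{n-1})\rangle = c\langle b_{\alpha(i)}, i_0(\textbf{B}_{n-1})\rangle$ as row vectors (indexed consistently by $\textbf{B}_{n-1}$).

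Next I would handle the case where $b_i$ does contain a chord connecting $a_0$ and $a_{2n-1}$, so that $p_0(b_i) \notin \textbf{B}_{n-1}$. Here Lemma 5.2 supplies a single $b_{\alpha(i)} \in \textbf{B}_{n-1}$ (independent of $b_j$) and, for each $b_j$, an element of $\{d, x_1, y_1, z_2\}$ such that $\langle p_0(b_i), b_j\rangle$ equals that element times $\langle b_{\alpha(i)}, b_j\rangle$. The one point that needs care is that the multiplying element $c$ must be the \emph{same} for all $b_j \in \textbf{B}_{n-1}$: this is true because, as noted in the proof of Lemma 5.2, $p_0(b_i)$ is the disjoint union of a fixed diagram representing $b_{\alpha(i)}$ together with one fixed closed curve $\kappa$ enclosing a fixed subset of $\{X_1, Y_1\}$; the type of $\kappa$ (and hence which of $d, x_1, y_1, z_2$ appears) depends only on that subset, not on $b_j$. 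So $c \in \mathcal{A}$ is determined by $b_i$ alone. Applying Proposition 2.1 once more to rewrite $\langle b_i, i_0(b_j)\rangle = \langle p_0(b_i), b_j\rangle$ and assembling over $b_j \in \textbf{B}_{n-1}$ yields the claimed row identity.

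I would then combine the two cases: in both, there is $b_{\alpha(i)} \in \textbf{B}_{n-1}$ and $c \in \mathcal{A} = \{1, d, x_1, y_1, z_2\}$ with $\langle b_i, i_0(\textbf{B}_{n-1})\rangle = c\,\langle b_{\alpha(i)}, i_0(\textbf{B}_{n-1})\rangle$, which is exactly the statement. I do not anticipate a genuine obstacle here — the content is entirely in Lemmas 5.1, 5.2 and Proposition 2.1. The only place demanding a sentence of justification, rather than mere citation, is the uniformity of $c$ over $b_j$ just discussed; writing the row-vector identity also requires being explicit that both $\langle b_i, i_0(\textbf{B}_{n-1})\rangle$ and $\langle b_{\alpha(i)}, i_0(\textbf{B}_{n-1})\rangle$ are indexed by the same ordered set $\textbf{B}_{n-1}$, matching $b_j$ with $i_0(b_j)$, so that the scalar $c$ factors out of the whole vector at once.
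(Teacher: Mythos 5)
Your proposal is correct and follows exactly the route the paper intends: the paper states the corollary as an immediate consequence of Lemmas 5.1, 5.2 and Proposition 2.1 without writing out the case split, and your two cases (with $c=1$ when $p_0(b_i)\in\textbf{B}_{n-1}$, and $c\in\{d,x_1,y_1,z_2\}$ otherwise) are precisely that argument. Your added remark that $c$ is uniform over $b_j$ because the extra closed curve in $p_0(b_i)$ is determined by $b_i$ alone is a worthwhile clarification of a point the paper leaves implicit.
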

That is, the rows of $\langle \textbf{B}_n, i_0(\textbf{B}_{n-1}) \rangle$ are each either equal to some row of $G_{n-1}$, or to some row of $G_{n-1}$ multiplied by one of the following variables: $d$, $x_1$, $y_1$, $z_2$. We now have all the lemmas needed for our main result of this section.
\begin{theorem}
For $n>1$, $\det G_{n-1}|\det G_n$.
\end{theorem}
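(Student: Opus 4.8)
The plan is to realize $\det G_{n-1}$ as an explicit polynomial cofactor of $\det G_n$ through a block factorization of $G_n$ over $R$, so that divisibility is immediate. First I would reorder the basis $\textbf{B}_n$ so that the $m := |\textbf{B}_{n-1}| = n\binom{2n-2}{n-1}$ diagrams lying in $i_0(\textbf{B}_{n-1})$ come first; reordering the basis conjugates $G_n$ by a permutation matrix and hence leaves $\det G_n$ unchanged. Writing $N := |\textbf{B}_n| = (n+1)\binom{2n}{n}$, I split $G_n = [\,M \mid M'\,]$, where $M = \langle \textbf{B}_n,\, i_0(\textbf{B}_{n-1})\rangle$ comprises the first $m$ columns and $M'$ the remaining $N - m$. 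Corollary 5.1 then supplies, for each $b_i \in \textbf{B}_n$, an index $\alpha(i)$ with $b_{\alpha(i)} \in \textbf{B}_{n-1}$ and a scalar $c_i \in \mathcal{A} = \{1, d, x_1, y_1, z_2\}$ such that the $b_i$-row of $M$ equals $c_i$ times the $b_{\alpha(i)}$-row of $G_{n-1}$, under the identification (via the bijection $i_0$) of the columns of $M$, indexed by $i_0(\textbf{B}_{n-1})$, with the columns of $G_{n-1}$, indexed by $\textbf{B}_{n-1}$. Taking $E$ to be the $N \times m$ matrix with $E_{b_i,\,l} = c_i\,\delta_{l,\,\alpha(i)}$, this is precisely the factorization $M = E\,G_{n-1}$ with $E$ a matrix over $R$.

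Next I would invoke the identity
$$[\,E\,G_{n-1} \mid M'\,] \;=\; [\,E \mid M'\,]\,\begin{pmatrix} G_{n-1} & 0 \\ 0 & I_{N-m}\end{pmatrix},$$
verified column by column, to conclude $\det G_n = \det[\,E \mid M'\,]\cdot\det G_{n-1}$. Since the entries of $E$ lie in $\mathcal{A}\subset R$ and the entries of $M'$ are monomials, hence also in $R$, the $N\times N$ matrix $[\,E \mid M'\,]$ has all entries in $R$, so $\det[\,E \mid M'\,]\in R$ and therefore $\det G_{n-1}\mid\det G_n$ in $R$. By Theorem 3.1 both determinants are nonzero, so $\det[\,E\mid M'\,]$ is in fact a nonzero element of $R$. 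Equivalently, one can reach the same conclusion by expanding $\det G_n$ multilinearly in its first $m$ columns: because the determinant is alternating, only the terms indexed by permutations $\sigma\in S_m$ survive, and resumming these produces the factor $\det G_{n-1}$ times $\det[\,E\mid M'\,]$.

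I expect no real difficulty in the linear-algebra step itself; the substance of the theorem has already been front-loaded into Lemmas 5.1--5.2, Proposition 2.1 and Corollary 5.1, which are exactly what guarantee that each row of $M$ is a \emph{single scaled} row of $G_{n-1}$ — an arbitrary $R$-linear combination of rows would not suffice for this argument. The only genuine care required is bookkeeping: applying the bijection $i_0$ consistently when matching columns of $M$ to columns of $G_{n-1}$, and keeping track of the basis reordering. The hypothesis $n > 1$ is used only so that $G_{n-1}$ is itself a Gram matrix in the sense already defined.
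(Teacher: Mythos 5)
Your proof is correct, and it rests on the same key input as the paper's --- Corollary 5.1, which says that every row of $M=\langle \textbf{B}_n, i_0(\textbf{B}_{n-1})\rangle$ is a single $\mathcal{A}$-multiple of a row of $G_{n-1}$ --- but you organize the linear algebra differently, and in a way that is arguably tighter. The paper first locates a literal copy of $G_{n-1}$ inside $G_n$, namely the block $\langle i_1(\textbf{B}_{n-1}), i_0(\textbf{B}_{n-1})\rangle$; this requires the $1$-conjugated embedding $i_1$ and the identity $\langle i_1(b_i), i_0(b_j)\rangle = \langle b_i,b_j\rangle$, a fact the authors explicitly defer to Section 6 (Lemma 6.1). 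It then row-reduces the remaining rows of $M$ against that block and extracts $\det G_{n-1}$ by a generalized Laplace expansion along the first $m$ columns. Your factorization $M = E\,G_{n-1}$, followed by
$$G_n = [\,E\,G_{n-1}\mid M'\,] = [\,E\mid M'\,]\begin{pmatrix} G_{n-1} & 0\\ 0 & I_{N-m}\end{pmatrix},
\qquad \det G_n = \det[\,E\mid M'\,]\cdot\det G_{n-1},$$
bypasses $i_1$ entirely: you never need a row of $M$ to equal a row of $G_{n-1}$ with multiplier exactly $1$, only that each row is \emph{some} scaled row, so the forward reference to Lemma 6.1 disappears and the proof is self-contained given Corollary 5.1. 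What the paper's route buys in exchange is the explicit block structure of $G_n'$ (the zero column-block sitting over and under a genuine $G_{n-1}$), which it reuses immediately in Theorem 6.1 to squeeze out a second factor of $\det G_{n-1}$; your cofactor $\det[\,E\mid M'\,]$ is less transparent for that purpose. Your closing ``multilinear expansion in the first $m$ columns'' remark is loosely phrased (the surviving terms are indexed by $m$-element row subsets, not by permutations in $S_m$), but it is offered only as an alternative and the main argument does not depend on it.
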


\begin{proof}
We begin by proving that for every row of the matrix $G_{n-1}$, there exists 
an equivalent row in the submatrix $\langle \textbf{B}_n, i_0(\textbf{B}_{n-1})\rangle$ of $G_n$. Fix $b_i \in \textbf{B}_{n-1}$ and take the row of $G_{n-1}$ given by $\langle b_i, \textbf{B}_{n-1}\rangle$. We claim that the row in $\langle \textbf{B}_n, i_0(\textbf{B}_{n-1})\rangle$ given by $\langle i_1(b_i), i_0(\textbf{B}_{n-1}) \rangle$ is equal to $\langle b_i, \textbf{B}_{n-1}\rangle$. In other words, $\langle i_1(b_i), i_0(\textbf{B}_{n-1})\rangle$ is equal to the $i^\text{th}$ row of $G_{n-1}$, a fact which we leave to the reader for the moment, but will demonstrate explicitly in the next section, cf. Theorem 6.1.\\

Reorder the elements of $\textbf{B}_n$ so that $\langle i_0(\textbf{B}_{n-1}),i_0(\textbf{B}_{n-1})\rangle$ forms an upper-leftmost block of $G_n$ and $\langle i_1(\textbf{B}_{n-1}),i_0(\textbf{B}_{n-1})\rangle$ forms a block directly underneath $\langle i_0(\textbf{B}_{n-1}),i_0(\textbf{B}_{n-1})\rangle$. This is illustrated below:
\begin{eqnarray*}
G_n &= &\begin{pmatrix}
\langle i_0(\textbf{B}_{n-1}),i_0(\textbf{B}_{n-1})\rangle & *&*&*&*&*\\
\langle i_1(\textbf{B}_{n-1}),i_0(\textbf{B}_{n-1})\rangle & *&*&*&*&*\\
*&*&*&*&*&*\\
*&*&*&*&*&*\\
*&*&*&*&*&*\\
*&*&*&*&*&*\\
\end{pmatrix}
\\&=&\begin{pmatrix}
\langle i_0(\textbf{B}_{n-1}),i_0(\textbf{B}_{n-1})\rangle & *&*&*&*&*\\
G_{n-1} & *&*&*&*&*\\
*&*&*&*&*&*\\
*&*&*&*&*&*\\
*&*&*&*&*&*\\
*&*&*&*&*&*\\
\end{pmatrix}
\end{eqnarray*}

Corollary 5.1 implies that every row of $\langle \textbf{B}_n, i_0(\textbf{B}_{n-1})\rangle$ is a multiple of some row in $G_{n-1}$. Let $j_1, \ldots, j_k$ denote the indices of all rows of $\langle \textbf{B}_n, i_0(\textbf{B}_{n-1})\rangle$ other than those in $\langle i_1(\textbf{B}_{n-1}),i_0(\textbf{B}_{n-1})\rangle$. Let ${G_n}'$ be the matrix obtained by properly subtracting multiples of rows in $\langle i_1(\textbf{B}_{n-1}),i_0(\textbf{B}_{n-1})\rangle$ from rows $j_1, \ldots, j_k$ of $G_n$ so that the submatrix obtained by restricting ${G_n}'$ to rows $j_1, \ldots, j_k$ and columns corresponding to states in $i_0(\textbf{B}_{n-1})$ is equal to $0$:
\begin{eqnarray*}
{G_n}' &= &\begin{pmatrix}
0 & *&*&*&*&*\\
G_{n-1} & *&*&*&*&*\\
0 & *&*&*&*&*\\
0&*&*&*&*&*\\
0&*&*&*&*&*\\
0&*&*&*&*&*\\
\end{pmatrix}
\end{eqnarray*}
Thus, ${G_n}'$ restricted to the columns corresponding to states in $i_0(\textbf{B}_{n-1})$ contains precisely $n{\binom{2n-2}{n-1}}$ nonzero rows, each equal to some unique row of $G_{n-1}$. The determinant of this submatrix is equal to $\det G_{n-1}$. Since $\det G_{n-1} | \det {G_n}'$ and $\det {G_n}' = \det G_n$, this completes the proof.
\end{proof}

\section{Further Relation Between $\det G_{n-1}$ and $\det G_n$}
As was first noted in the proof of Theorem 5.1, there exists a submatrix of 
$G_n$ equal to $G_{n-1}$. This section will be focused on identifying 
multiple nonoverlapping submatrices in $G_n$ equal to multiples of $G_{n-1}$. 
This will prove useful for simplifying the computation of $\det G_n$. 
We start with the main lemma for this section and for Theorem 5.1.

\begin{lemma}
For any $b_i,b_j \in \textbf{B}_{n-1}$, $\langle i_0(b_i), i_1(b_j) \rangle = \langle i_1(b_i), i_0(b_j) \rangle = \langle b_i, b_j \rangle$.
\end{lemma}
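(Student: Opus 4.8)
The plan is to unwind the definitions of the conjugated embeddings $i_0$ and $i_1$ and reduce the claim to a statement about closed curves in the glued diagram, using Proposition 2.1 as the bridge. Recall $i_0(b_j)$ adjoins to $b_j$ a small chord connecting the two boundary points between $a_0$ and $a_{2n-1}$ (equivalently, connecting the ``new'' points $a_0$ and $a_{2n-1}$ in $\textbf{B}_n$), while $i_1(b_i) = r_{\pi/n}\, i_0\, r_{\pi/(n-1)}^{-1}(b_i)$ adjoins instead a small chord connecting $a_1$ and $a_0$. First I would observe that when we form $i_1(b_i) \circ i_0(b_j)^*$, the small outer chord of $i_0(b_j)^*$ near $a_0, a_{2n-1}$ and the small outer chord of $i_1(b_i)$ near $a_0, a_1$ together contribute a single arc in the glued sphere that simply joins the point $a_{2n-1}$-strand to the $a_1$-strand along the outer boundary, without enclosing any of the holes $X_1, X_2, Y_1, Y_2$; equivalently, this composite arc plays exactly the role of the ``pushing inside'' operation $p_0$. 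Thus $i_1(b_i) \circ i_0(b_j)^*$ is isotopic (in the four-holed sphere) to $b_i \circ b_j^*$, which gives $\langle i_1(b_i), i_0(b_j)\rangle = \langle b_i, b_j\rangle$.

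To make this rigorous I would proceed in three steps. Step one: show directly from Proposition 2.1 that $i_1(b_i) \circ i_0(b_j)^* = p_1(i_1(b_i)) \circ b_j^*$ after accounting for the Dehn-twist conjugation — more cleanly, apply the rotational symmetry $r_{\pi/n}$ (which is an isotopy of the four-holed sphere fixing the holes, since the relevant circle of radius $R'$ encloses only $X_1, Y_1$ and the twist is the identity inside it) to rewrite $\langle i_1(b_i), i_0(b_j)\rangle$ in terms of $\langle i_0(b_i'), i_{-1}(b_j')\rangle$ or, better, to reduce to the case already handled by Proposition 2.1 with the chord at position $0$. Step two: invoke Proposition 2.1 in the form $b \circ i_0(b_j)^* = p_0(b) \circ b_j^*$ with $b = i_1(b_i)$ (after the rotational normalization), and then check that $p_0(i_1(b_i))$, where $i_1(b_i)$ has a small chord at $a_0 a_1$, recovers exactly $b_i$: the outer chord added by $p_0$ connecting $a_0$ and $a_{2n-1}$ caps off the strand ending at the newly inserted $a_1 a_0$ chord of $i_1(b_i)$, and after isotopy the two small chords cancel, leaving $b_i$ with no extra closed curve. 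Step three: conclude $\langle i_1(b_i), i_0(b_j)\rangle = \langle p_0(i_1(b_i)), b_j\rangle = \langle b_i, b_j\rangle$. The identity $\langle i_0(b_i), i_1(b_j)\rangle = \langle b_i, b_j\rangle$ then follows by the symmetry $\langle b_i, b_j\rangle = h_t(\langle b_j, b_i\rangle)$ together with the observation that the $h_t$-action (swapping $X_1 \leftrightarrow X_2$, $Y_1 \leftrightarrow Y_2$) commutes with the cancellation of the small chords; alternatively, one runs the same three steps with the roles of $i_0$ and $i_1$ interchanged, which is symmetric because the small chord of $i_0$ sits at $a_0 a_{2n-1}$ and that of $i_1$ at $a_0 a_1$, and $p_0$ treats the two symmetrically.

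The main obstacle I anticipate is bookkeeping of the Dehn twists: the conjugated embedding $i_1 = r_{\pi/(n+1)}^{1} i_0 r_{\pi/n}^{-1}$ involves twists by $\pi/n$ and $\pi/(n-1)$ at the two different levels $n-1$ and $n$, so one must verify carefully that when $i_1(b_i)$ (a level-$n$ diagram) is glued against $i_0(b_j)^*$ (also level $n$), the net effect of the twists on the region between radius $R'$ and the outer boundary is an honest isotopy of the four-holed sphere — in particular that it does not drag a strand across a hole. This is true because all the twisting happens in the annulus $R' \le |z| \le 1$ where there are no holes (the holes $X_1, Y_1$ sit inside radius $R'$, and $X_2, Y_2$ sit outside radius $1$ after inversion), so $r_\alpha$ extended by the identity is isotopic rel the holes to the identity on the sphere; hence composing $b_i$ with $b_j^*$ versus composing $r_\alpha(b_i)$ with $b_j^*$ yields isotopic curve systems in the four-holed sphere and hence equal pairings. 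Once this ``twists are invisible to the pairing'' principle is stated cleanly, the rest is the short isotopy argument cancelling the two small chords, and the proof is complete. This lemma then immediately justifies the claim deferred in the proof of Theorem 5.1 that $\langle i_1(b_i), i_0(\textbf{B}_{n-1})\rangle$ is the $i^{\text{th}}$ row of $G_{n-1}$.
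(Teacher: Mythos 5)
Your proposal is correct and follows essentially the same route as the paper: apply Proposition 2.1 to reduce $\langle i_1(b_i), i_0(b_j)\rangle$ to $\langle p_0 i_1(b_i), b_j\rangle$, verify that $p_0 i_1(b_i) = b_i$ by cancelling the two small boundary chords (the paper does this step pictorially), and then obtain the companion identity $\langle i_0(b_i), i_1(b_j)\rangle = \langle b_i, b_j\rangle$ from the transpose symmetry $\langle b_i, b_j\rangle = h_t(\langle b_j, b_i\rangle)$. Your extra care about the Dehn-twist bookkeeping is a sound elaboration of what the paper leaves implicit in its figure.
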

\begin{proof}
We begin with the equality $\langle i_1(b_i), i_0(b_j) \rangle = \langle b_i, b_j \rangle$. By Proposition 2.1, $i_1(b_i) \circ {i_0(b_j)}^* = p_0i_1(b_i) \circ {b_j}^*$, so it suffices to prove that $p_0i_1(b_i) = p_0r_{\pi/n}i_0{r_{\pi/n-1}}^{-1}(b_i) = b_i$. This is demonstrated pictorially:
\begin{figure}[htbp]
\begin{center}
\includegraphics[scale=1]{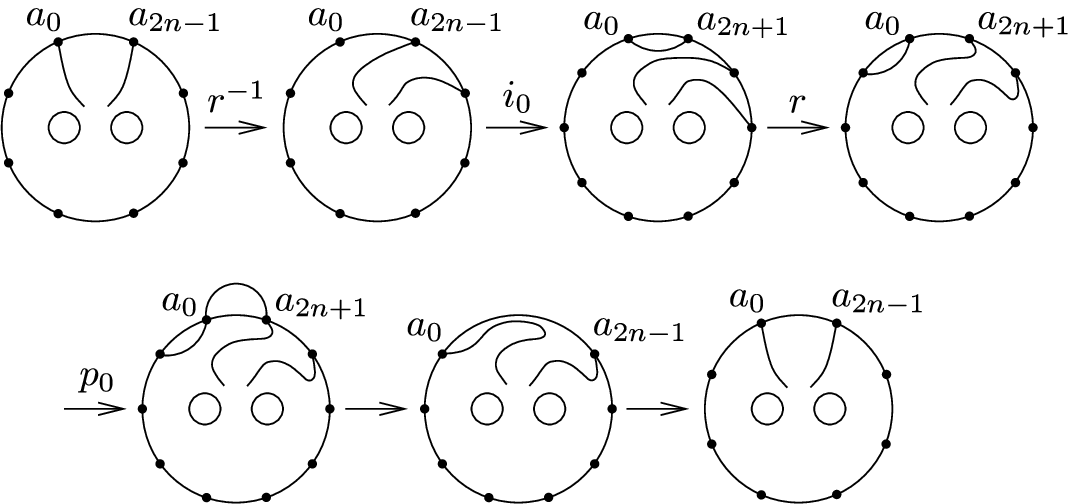}
\caption{}
\label{fig10}
\end{center}
\end{figure}

Thus, $\langle i_1(b_i), i_0(b_j) \rangle = \langle b_i, b_j \rangle$. Recall that $\langle b_i, b_j \rangle = h_t(\langle b_j, b_i \rangle)$. From this and the previous equality, it follows that
$$\langle i_0(b_i), i_1(b_j) \rangle = h_t(\langle i_1(b_j), i_0(b_i) \rangle)= h_t(\langle b_j, b_i \rangle) = {h_t}^2(\langle b_i, b_j \rangle) = \langle b_i, b_j \rangle.$$
\end{proof}

\begin{corollary}
$\langle i_0(\textbf{B}_{n-1}), i_1(\textbf{B}_{n-1})\rangle = \langle i_1(\textbf{B}_{n-1}), i_0(\textbf{B}_{n-1}) \rangle = G_{n-1}$.
\end{corollary}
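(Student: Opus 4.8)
The plan is to deduce the corollary directly from Lemma 6.1 by unwinding the definition of the submatrix notation $\langle A, B \rangle$ introduced in Section 2. Recall that for subsets $A = \{b_{n_1}, \ldots, b_{n_p}\}$ and $B = \{b_{m_1}, \ldots, b_{m_q}\}$ of $\textbf{B}_n$, the matrix $\langle A, B \rangle$ has $(i,j)$ entry $\langle b_{n_i}, b_{m_j} \rangle$. So the statement $\langle i_1(\textbf{B}_{n-1}), i_0(\textbf{B}_{n-1}) \rangle = G_{n-1}$ is really the assertion that, after fixing a common indexing $b_1, \ldots, b_m$ of $\textbf{B}_{n-1}$ (where $m = n\binom{2n-2}{n-1}$), the matrix whose $(i,j)$ entry is $\langle i_1(b_i), i_0(b_j) \rangle$ coincides entry-by-entry with $G_{n-1}$, whose $(i,j)$ entry is $\langle b_i, b_j \rangle$.

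First I would note that $i_0$ and $i_1$ are injective maps $\textbf{B}_{n-1} \to \textbf{B}_n$ (this is implicit in the construction: adjoining a single boundary-parallel chord does not identify distinct diagrams, since filling the holes recovers the original underlying Catalan state with an extra nested arc, and the distribution of holes among regions is unchanged). Hence $i_0(\textbf{B}_{n-1})$ and $i_1(\textbf{B}_{n-1})$ are genuinely indexed by $\textbf{B}_{n-1}$, and it makes sense to use the same index set for rows and columns. Then the entry-by-entry comparison is exactly the content of Lemma 6.1: for every pair $(b_i, b_j)$ we have $\langle i_1(b_i), i_0(b_j) \rangle = \langle b_i, b_j \rangle$, which is the $(i,j)$ entry of $G_{n-1}$; likewise $\langle i_0(b_i), i_1(b_j) \rangle = \langle b_i, b_j \rangle$ gives the second equality. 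Since two matrices with equal entries in every position are equal, the corollary follows.

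There is essentially no obstacle here — this is a bookkeeping consequence of the lemma. The only point requiring a word of care is the indexing convention: one must make sure that the row ordering of $i_1(\textbf{B}_{n-1})$ and the column ordering of $i_0(\textbf{B}_{n-1})$ are induced by one and the same ordering of $\textbf{B}_{n-1}$, so that the identification with $G_{n-1}$ (defined using that same ordering for its rows and columns) is literal rather than merely up to simultaneous permutation. Once that is stipulated, writing $\langle i_1(\textbf{B}_{n-1}), i_0(\textbf{B}_{n-1}) \rangle = (\langle i_1(b_i), i_0(b_j) \rangle)_{i,j} = (\langle b_i, b_j \rangle)_{i,j} = G_{n-1}$, and symmetrically for the other ordering, is the entire proof. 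This is also precisely the fact that was invoked without proof in Theorem 5.1, so the corollary closes that gap.
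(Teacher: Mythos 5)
Your proposal is correct and matches the paper's (implicit) reasoning exactly: the corollary is stated there as an immediate entry-by-entry consequence of Lemma 6.1, which is precisely what you carry out. Your added remarks on injectivity of $i_0$, $i_1$ and on using a single common ordering of $\textbf{B}_{n-1}$ for rows and columns are sensible bookkeeping points but do not change the argument.
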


\begin{lemma}
For any $b_i, b_j \in \textbf{B}_{n-1}$, $\langle i_0(b_i),i_0(b_j)\rangle = \langle i_1(b_i), i_1(b_j)\rangle = d\langle b_i,b_j\rangle$.
\end{lemma}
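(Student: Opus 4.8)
The plan is to proceed exactly as in the proof of Lemma 6.1, reducing both equalities to statements about the operation $p_0$ via Proposition 2.1, and then reading off the extra factor of $d$ from a closed curve that appears after pushing.

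First I would handle $\langle i_1(b_i), i_1(b_j) \rangle = d\langle b_i, b_j\rangle$. By Proposition 2.1, $i_1(b_i) \circ {i_1(b_j)}^* = p_0(i_1(b_i)) \circ {(i_1(b_j) \text{ with its added chord removed appropriately})}^*$ — more carefully, I would write $i_1(b_j) = i_0(b_j')$ for no single $b_j'$, so instead I apply Proposition 2.1 in the form $b \circ {i_0(c)}^* = p_0(b) \circ c^*$ with $b = i_1(b_i)$ and, after an $\alpha$-Dehn twist, identify $i_1(b_j)$ with the $i_0$-image of $b_j$ in a rotated coordinate system. The computation in the proof of Lemma 6.1 already shows $p_0 i_1(b_i) = b_i$; the difference here is that $i_1(b_j)$ still carries its own near-boundary chord connecting $a_1$ and $a_0$, and when we glue $i_1(b_i)$ against ${i_1(b_j)}^*$, the near-boundary chord of $i_1(b_i)$ together with the near-boundary chord of $i_1(b_j)$ closes up into a single small closed curve that bounds a disk near the outer boundary, enclosing none of $X_1, X_2, Y_1, Y_2$. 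That curve contributes exactly one factor of $d$, and the rest of the diagram is $b_i \circ {b_j}^*$, giving the monomial $d\langle b_i, b_j\rangle$. A picture analogous to Figure \ref{fig10} makes this transparent.

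For $\langle i_0(b_i), i_0(b_j)\rangle = d\langle b_i, b_j\rangle$ the argument is the same but simpler: by Proposition 2.1 directly, $i_0(b_i) \circ {i_0(b_j)}^* = p_0(i_0(b_i)) \circ {b_j}^*$, and $p_0(i_0(b_i))$ is $b_i$ together with a closed curve near the outer boundary (the added chord of $i_0(b_i)$ pushed in against the now-filled slot), a curve which again bounds a trivial disk enclosing no holes and hence contributes one factor of $d$. So $i_0(b_i) \circ {i_0(b_j)}^* = d\,(b_i \circ {b_j}^*)$ and the claim follows. Then the chain $\langle i_0(b_i), i_0(b_j)\rangle = d\langle b_i, b_j\rangle = \langle i_1(b_i), i_1(b_j)\rangle$ closes up.

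The main obstacle is purely bookkeeping: making precise the identification of $i_1(b_j)$ with an $i_0$-image under an $\alpha$-Dehn twist so that Proposition 2.1 applies, and then verifying that the closed curve produced in the $i_1$ case really is nullhomotopic away from all four holes $X_1, X_2, Y_1, Y_2$ (equivalently, of type $d$ rather than type $x_1$, $y_1$, or $z_2$). This is because the near-boundary region between $a_{k}$ and $a_{k-1}$ lies outside the radius-$R'$ circle enclosing $X_1$ and $Y_1$, so the small curve cannot surround any hole; but this should be spelled out, most cleanly with a figure in the style of Figures \ref{fig9} and \ref{fig10}. Everything else is a direct consequence of Proposition 2.1 and the twist computation already carried out in Lemma 6.1.
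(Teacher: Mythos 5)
Your proposal is correct and rests on the same geometric observation as the paper's proof: the two near-boundary chords of $i_0(b_i)$ and ${i_0(b_j)}^*$ (respectively $i_1$) close up into a trivial circle enclosing none of the holes, contributing one factor of $d$, while the rest of the glued diagram is $b_i \circ {b_j}^*$. The paper states this directly and disposes of the $i_1$ case "by symmetry," whereas you route both cases through Proposition 2.1 and $p_0$; this is a harmless detour that does not change the substance of the argument.
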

\begin{proof}
$i_0(b_i) \circ {i_0(b_j)}^*$ is composed of $b_i \circ {b_j}^*$ in addition to a chord close to the boundary glued with its inverse. The latter pairing gives a trivial circle. Thus, $\langle i_0(b_i), i_0(b_j)\rangle = d\langle b_i, b_j\rangle$ for all $b_i, b_j \in \textbf{B}_{n-1}$.\\

By symmetry, $\langle i_1(\textbf{B}_{n-1}), i_1(\textbf{B}_{n-1})\rangle = dG_{n-1}$.
\end{proof}
\begin{corollary}
$\langle i_0(\textbf{B}_{n-1}), i_0(\textbf{B}_{n-1})\rangle = \langle i_1(\textbf{B}_{n-1}), i_1(\textbf{B}_{n-1})\rangle = dG_{n-1}$.
\end{corollary}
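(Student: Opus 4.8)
The plan is to read this off directly from Lemma 6.2 by comparing the two matrices entry by entry. Recall that for subsets $A,B \subseteq \textbf{B}_n$ the matrix $\langle A,B\rangle$ has its rows indexed by the elements of $A$ and its columns by the elements of $B$. The embedding $i_0$ adjoins a fixed non-crossing chord near the outer boundary, and is injective (for instance because, on the classes where it is relevant, it is undone by $p_0$, as used in the proof of Lemma 6.1); the same holds for the conjugated embedding $i_1$. Hence each element of $i_0(\textbf{B}_{n-1})$, and of $i_1(\textbf{B}_{n-1})$, comes from a unique element of $\textbf{B}_{n-1}$, and the blocks $\langle i_0(\textbf{B}_{n-1}),i_0(\textbf{B}_{n-1})\rangle$ and $\langle i_1(\textbf{B}_{n-1}),i_1(\textbf{B}_{n-1})\rangle$ genuinely have size $N\times N$ with $N = |\textbf{B}_{n-1}| = n\binom{2n-2}{n-1}$.

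Fix an ordering $b_1,\ldots,b_N$ of $\textbf{B}_{n-1}$ and order $i_0(\textbf{B}_{n-1})$ and $i_1(\textbf{B}_{n-1})$ compatibly, so that the $m$-th element of each image is the image of $b_m$. With this convention the $(i,j)$-entry of $\langle i_0(\textbf{B}_{n-1}),i_0(\textbf{B}_{n-1})\rangle$ is $\langle i_0(b_i),i_0(b_j)\rangle$, which by Lemma 6.2 equals $d\langle b_i,b_j\rangle$, i.e. exactly $d$ times the $(i,j)$-entry of $G_{n-1} = \langle \textbf{B}_{n-1},\textbf{B}_{n-1}\rangle$. Therefore $\langle i_0(\textbf{B}_{n-1}),i_0(\textbf{B}_{n-1})\rangle = dG_{n-1}$, and the identity for $i_1$ follows in the same way from the second equality in Lemma 6.2. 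I expect no genuine obstacle here; the only thing requiring a moment's care is the bookkeeping of the orderings, which is settled once the injectivity of the embeddings is noted. Together with Corollary 6.1 this exhibits four pairwise non-overlapping $N\times N$ blocks inside $G_n$, which is the structural fact the rest of the section exploits.
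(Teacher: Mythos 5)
Your proposal is correct and matches the paper's treatment: the corollary is stated there as an immediate consequence of Lemma 6.2, which gives the entrywise identity $\langle i_0(b_i),i_0(b_j)\rangle = \langle i_1(b_i),i_1(b_j)\rangle = d\langle b_i,b_j\rangle$, and your passage from entries to the full $N\times N$ blocks (with the injectivity and ordering bookkeeping made explicit) is exactly the intended argument.
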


Using these two facts, we can construct from $G_n$ a $(|B_n| - 2|B_{n-1}|) 
\times (|B_n| - 2|B_{n-1}|)$ matrix whose determinant is equal to $\det G_n / (1-d^2)^{n{\binom{2n-2}{n-1}}}{\det G_{n-1}}^2$. This allows us to compute $\det G_n$ with greater ease, assuming we know $\det G_{n-1}$. This process is shown in the next theorem.

\begin{theorem}
There exists an integer $k \ge 0$\footnote{Clearly $k$ is bounded above by 
$(n+1){\binom{2n}n}$, or even better, by $|\textbf{B}_n| - 2|\textbf{B}_{n-1}|$. There are obviously better approximations possible, but we do not address them in this paper.} such that, for all integers $n > 1$, $${\det G_{n-1}}^2 | \det G_n (1-d^2)^k.$$
\end{theorem}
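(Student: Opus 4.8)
The plan is to exploit Corollary 6.1 and Corollary 6.2 to locate, inside $G_n$, a large block-structured region built out of two disjoint copies of $\mathbf{B}_{n-1}$ under the embeddings $i_0$ and $i_1$. Order $\mathbf{B}_n$ so that the states in $i_0(\mathbf{B}_{n-1})$ come first, those in $i_1(\mathbf{B}_{n-1})$ come second (note $i_0$ and $i_1$ are injective with disjoint images, since a chord at $a_0a_{2n-1}$ and a chord at $a_1a_0$ produce different diagrams), and the remaining $|\mathbf{B}_n| - 2|\mathbf{B}_{n-1}|$ states come last. With respect to this ordering, the two corollaries tell us that the upper-left $2|\mathbf{B}_{n-1}| \times 2|\mathbf{B}_{n-1}|$ corner of $G_n$ has the $2\times 2$ block form
\begin{equation*}
\begin{pmatrix} dG_{n-1} & G_{n-1} \\ G_{n-1} & dG_{n-1} \end{pmatrix}.
\end{equation*}

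First I would perform a block row/column reduction on this corner. Subtracting $d$ times the block row $(G_{n-1} \mid dG_{n-1})$ from $(dG_{n-1} \mid G_{n-1})$ — more precisely, using the $i_1$-block of rows to clear the $i_0$-block — turns the $(1,1)$ corner block into $dG_{n-1} - d G_{n-1} = 0$ and the $(1,2)$ block into $G_{n-1} - d^2 G_{n-1} = (1-d^2)G_{n-1}$, at the cost of also altering the entries of these rows in the columns indexed by the ``remaining'' states (the $*$ columns). The key point, exactly as in the proof of Theorem 5.1, is that Corollary 5.1 guarantees every such altered entry is still a polynomial in $R$: each row of $\langle \mathbf{B}_n, i_0(\mathbf{B}_{n-1})\rangle$ is an element of $\mathcal{A}$ times a row of $G_{n-1}$, hence the reduction only involves subtracting $R$-multiples of rows, never division. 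Symmetrically I would then use the (now modified) $i_0$-block of rows to clear the $i_1$-block, so that after these operations the first $2|\mathbf{B}_{n-1}|$ rows of the transformed matrix $G_n'$ have the form: the $i_0$-rows are zero in the $i_0$-columns and equal to $(1-d^2)G_{n-1}$ in the $i_1$-columns, and the $i_1$-rows are $G_{n-1}$ in the $i_0$-columns and zero in the $i_1$-columns. Since all row operations are $R$-linear with unit determinant, $\det G_n' = \det G_n$.

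Next I would do column operations (again only subtracting $R$-multiples, legitimate by the transpose version of Corollary 5.1 / Theorem 3.2(4)) to clear all entries below the $i_0$- and $i_1$-blocks in the first $2|\mathbf{B}_{n-1}|$ columns, i.e.\ to zero out the ``$*$'' entries sitting beneath $dG_{n-1}$ and $G_{n-1}$ in the lower block rows. After this, $G_n'$ is block lower-triangular with respect to the splitting (first $2|\mathbf{B}_{n-1}|$ coordinates) $\oplus$ (remaining coordinates): the top-left $2|\mathbf{B}_{n-1}|\times 2|\mathbf{B}_{n-1}|$ block is
$\begin{pmatrix} 0 & (1-d^2)G_{n-1} \\ G_{n-1} & 0 \end{pmatrix}$,
whose determinant is $\pm (1-d^2)^{|\mathbf{B}_{n-1}|}(\det G_{n-1})^2$, and the bottom-right block is some $(|\mathbf{B}_n| - 2|\mathbf{B}_{n-1}|)\times(|\mathbf{B}_n| - 2|\mathbf{B}_{n-1}|)$ matrix $M$ over $R$. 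Therefore $\det G_n = \pm(1-d^2)^{|\mathbf{B}_{n-1}|}(\det G_{n-1})^2 \det M$, and taking $k = |\mathbf{B}_{n-1}| = n\binom{2n-2}{n-1}$ (or any larger integer; the footnote's bound works too) gives $(\det G_{n-1})^2 \mid \det G_n (1-d^2)^k$, since $\det M \in R$.

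\textbf{Main obstacle.} The routine bookkeeping is not the difficulty; the delicate point is justifying that the column-clearing step below the $i_0$/$i_1$ blocks can be carried out using only $R$-coefficient operations. For rows this is exactly Corollary 5.1, but for columns one needs the analogous statement that each \emph{column} of $\langle i_0(\mathbf{B}_{n-1}), \mathbf{B}_n\rangle$ (and of $\langle i_1(\mathbf{B}_{n-1}), \mathbf{B}_n\rangle$) is an $\mathcal{A}$-multiple of a column of $G_{n-1}$; this follows by applying $h_t$ and the symmetry ${}^tG_n$ relation of Theorem 3.2(4) to Corollary 5.1, but one must check that $h_t(\mathcal{A}) = \mathcal{A}$ — which holds since $h_t$ fixes $1, d, z_2$ and swaps $x_1 \leftrightarrow x_2$, $y_1\leftrightarrow y_2$, so actually one should use $\mathcal{A}' = \{1,d,x_1,x_2,y_1,y_2,z_2\}$ on the column side. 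Verifying that the row reductions and column reductions do not interfere with one another (i.e.\ that after clearing rows one can still clear columns while keeping everything in $R$, and that the order can be arranged so the two blocks get a clean anti-diagonal form) is the one genuinely fiddly part of the argument.
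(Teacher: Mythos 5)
Your block decomposition is the same as the paper's: order $\mathbf{B}_n$ so that $i_0(\mathbf{B}_{n-1})$ and $i_1(\mathbf{B}_{n-1})$ come first, use Corollaries 6.1 and 6.2 to identify the corner $\begin{pmatrix} dG_{n-1} & G_{n-1}\\ G_{n-1} & dG_{n-1}\end{pmatrix}$, and reduce. The genuine gap is in the clearing step you yourself flag as ``fiddly'': it cannot be carried out over $R$, and the obstruction is not the one you identify (whether $h_t(\mathcal{A})=\mathcal{A}$) but the factor $1-d^2$ itself. After the first row reduction, the only rows available for killing the entries in the $i_1(\mathbf{B}_{n-1})$-columns of the remaining $|\mathbf{B}_n|-2|\mathbf{B}_{n-1}|$ rows are the $i_0$-rows, which now read $(1-d^2)G_{n-1}$ there; by Corollary 5.1 (applied to $i_1$) those remaining entries have the form $a_1 l_1 - a_2 d\, l_2$ with $l_1,l_2$ rows of $G_{n-1}$ and $a_1,a_2\in\mathcal{A}$, so expressing them as combinations of the rows of $(1-d^2)G_{n-1}$ forces coefficients in $R[(1-d^2)^{-1}]$. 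The same denominator appears if you try column operations instead: a later column restricted to the first $2|\mathbf{B}_{n-1}|$ rows looks like $(c_1 u_{m_1},\, c_2 u_{m_2})$ while the corner columns look like $(d u_m,\, u_m)$ and $(u_m,\, d u_m)$, and solving for the coefficients again requires inverting $1-d^2$ (the determinant of $\begin{pmatrix} d&1\\1&d\end{pmatrix}$). So your assertion that the residual block $M$ has entries, hence determinant, in $R$ is unjustified; indeed your conclusion $\det G_n = \pm(1-d^2)^{|\mathbf{B}_{n-1}|}(\det G_{n-1})^2\det M$ with $\det M\in R$ would prove $(\det G_{n-1})^2\mid\det G_n$ with $k=0$, which the paper explicitly states is an open problem.

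The paper's proof accepts this loss: it performs the second clearing over $R[(1-d^2)^{-1}]$, obtaining $\det G_n=(1-d^2)^{n\binom{2n-2}{n-1}}(\det G_{n-1})^2\det\bar{G_n}$ with $\det\bar{G_n}$ only known to lie in $R[(1-d^2)^{-1}]$; the factor $(1-d^2)^k$ in the statement exists precisely to clear these denominators. Your argument becomes correct, and essentially identical to the paper's, once you replace ``$\det M\in R$'' by ``$\det M\in R[(1-d^2)^{-1}]$'' and choose $k$ large enough to absorb the resulting negative powers of $1-d^2$. Note also that your intermediate claim that the $i_1$-rows can be made zero in the $i_1$-columns over $R$ already requires subtracting $\tfrac{d}{1-d^2}$ times the modified $i_0$-rows; this particular step is harmless only because $\det\begin{pmatrix}0&B\\C&D\end{pmatrix}=\pm\det B\det C$ regardless of $D$, so it should simply be omitted.
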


\begin{proof}
Order the elements of $\textbf{B}_n$, (or equivalently, the rows and columns of $G_n$) as shown in Theorem 5.1.
We apply the procedure from Theorem 5.1 to construct ${G_n}'$, whose form is given roughly below:
\begin{eqnarray*}
{G_n}' &= &
\begin{pmatrix}
0 & (1-d^2)G_{n-1} & \begin{matrix}*&*&*&*\end{matrix}\\
G_{n-1} & dG_{n-1} & \begin{matrix}*&*&*&*\end{matrix}\\
\begin{matrix}0\\0\\0\\0\end{matrix}&\framebox{$\begin{matrix}*\\ *\\ *\\ *\end{matrix}$}& \begin{matrix}*&*&*&*\\ *&*&*&*\\ *&*&*&*\\ *&*&*&*\end{matrix}\\
\end{pmatrix}
\end{eqnarray*}
Consider the block in ${G_n}'$ whose columns correspond to states in $i_1(\textbf{B}_{n-1})$ and whose rows correspond to states in neither $i_0(\textbf{B}_{n-1})$ nor $i_1(\textbf{B}_{n-1})$ (boxed above). Every row in this submatrix is a linear combination of two rows from $G_{n-1}$. More precisely, each row is of the form $a_1l_1-a_2dl_2$, where $l_1$ and $l_2$ are two rows, not necessarily distinct, in $G_{n-1}$, and $a_1, a_2 \in \mathcal{A} = \{1,d, x_1, y_1, z_2\}$. If we assume $(1-d^2)$ is invertible in our ring, then each row is a linear combination of two rows from $(1-d^2)G_{n-1}$. We then simplify ${G_n}'$ as follows.\\

Let ${G_n}''$ be the matrix obtained by properly subtracting linear combinations of the first $n{\binom{2n-2}{n-1}}$ rows of ${G_n}'$ from the rows which correspond to states in neither $i_0(\textbf{B}_{n-1})$ nor $i_1(\textbf{B}_{n-1})$ so that the submatrix obtained by restricting ${G_n}''$ to columns corresponding to states in $i_1(\textbf{B}_{n-1})$ and rows corresponding to states in neither $i_0(\textbf{B}_{n-1})$ nor $i_1(\textbf{B}_{n-1})$ is equal to $0$:
\begin{eqnarray*}
{G_n}'' &= &\begin{pmatrix}
0 & (1-d^2)G_{n-1} &\begin{matrix}*&*&*&*\end{matrix}\\
G_{n-1} & dG_{n-1} &\begin{matrix}*&*&*&*\end{matrix}\\
\begin{matrix}0\\0\\0\\0\end{matrix}&\begin{matrix}0\\0\\0\\0\end{matrix}&
\framebox{$\begin{matrix}*&*&*&*\\ *&*&*&*\\ *&*&*&*\\ *&*&*&*\end{matrix}$}
\end{pmatrix}
\end{eqnarray*}
The block decomposition at this point proves that $\det {G_n}''$ is equal to $(1-d^2)^{n{\binom{2n-2}{n-1}}}({\det G_{n-1}})^2$ times the determinant of the boxed block, which we denote by $\bar{G_n}$. The latter contains a power of $(1-d^2)^{-1}$, whose degree is unspecified. Thus, ${\det G_{n-1}}^2 | \det {G_n}'' (1-d^2)^k$ for some integer $k \ge 0$.
We remind the reader that ${G_n}''$ is obtained from ${G_n}'$ via determinant preserving operations, and hence $\det {G_n}' = \det G_n$.
\end{proof}

Note that if $\det \bar{G_n}$ has fewer than $n{\binom{2n-2}{n-1}}$ powers of $(1-d^2)^{-1}$, then ${\det G_{n-1}}^2 | \det {G_n}$. It remains an open problem as to whether this is true. For an example of this decomposition, we refer the reader to the Appendix.

\section{Future Directions}
In this section, we discuss briefly generalizations of the Gram determinant and present a number of open questions and conjectures.

\subsection{The case of a disk with $k$ holes}
We can generalize our setup by considering $F_{0,k}^n$, a unit disk with $k$ holes, 
in addition to $2n$ points, $a_0, \ldots, a_{2n-1}$, arranged in a similar 
way to points in $F_{0,2}^n$. For $b_i, b_j \in \textbf{B}_{n,k}$, 
let $b_i \circ {b_j}^*$ be defined in the same way as before. 
Each paired diagram $b_i \circ {b_j}^*$ consists of up to $n$ closed curves 
on the $2$-sphere ($D^2 \cup {(D^2)}^*$) with $2k$ holes. Let $\mathcal{S}$ denote the 
set of all $2k$ holes. We differentiate between the closed curves based on 
how they partition $\mathcal{S}$. We define a bilinear form by counting 
the multiplicities of each type of closed curve in the paired diagram. 
In the case $k=2$, we assigned to each paired diagram a corresponding 
element in a polynomial ring of eight variables, each variable representing 
a type of closed curve. In the general case, the number of types of closed 
curves is equal to
$$\frac{2^{|S|}}{2} = \frac{2^{2k}}{2} = 2^{2k-1}$$
so we can define the Gram matrix of the bilinear form for a disk with $k$ 
holes and $2n$ points with $(n+1)^{k-1}{\binom{2n}n} \times 
(n+1)^{k-1}{\binom{2n}n}$ entries, each belonging to a polynomial ring of 
$2^{2k-1}$ variables. We denote this Gram matrix by $G_n^{F_{0,k}}$. 
For $n=1$ and $k=3$, we can easily write this $8\times 8$ Gram matrix. For purposes of notation, let us denote the holes in 
$F^n_{0,3}$ by $\partial_1$, $\partial_2$ and $\partial_3$, and 
their inversions by $\partial_{-1}$, $\partial_{-2}$ and $\partial_{-3}$, 
respectively. Hence, each closed curve in the surface encloses some subset 
of $\mathcal{S} = \{\partial_1,\partial_{-1},\partial_2,\partial_{-2},\partial_3,
\partial_{-3}\}$. Let $x_{a_1,a_2,a_3}$ denote a curve separating the set of 
holes $\{\partial_{a_1}, \partial_{a_2}, \partial_{a_3}\}$ from $\mathcal{S} - 
\{\partial_{a_1}, \partial_{a_2}, \partial_{a_3}\}$. 
We can similarly define $x_{a_1, a_2}$ and $x_{a_1}$. 
The Gram matrix is then:
$$ G_1^{F_{0,3}}=
\begin{pmatrix}
d & x_{-3} & x_{-2} & x_{-2,-3} & x_{-1} & x_{-1,-3} & x_{-1,-2} & x_{1,2,3}\\
x_3 & x_{3,-3} & x_{-2, 3} & x_{1,-1,2} & x_{-1, 3} & x_{1,2,-2} & x_{1,2,-3} & x_{1,2}\\
x_2 & x_{2,-3} & x_{2,-2} & x_{1,-1,3} & x_{-1,2} & x_{1,-2,3} & x_{1,3,-3} & x_{1,3}\\
x_{2,3} & x_{1,-1,-2} & x_{1,-1,-3} & x_{1,-1} & x_{1,-2,-3} & x_{1,-2} & x_{1,-3} & x_1\\
x_1 & x_{1,-3} & x_{1,-2} & x_{1,-2,-3} & x_{1,-1} & x_{1,-1,-3} & x_{1,-1,-2} & x_{2,3}\\
x_{1,3} & x_{1,3,-3} & x_{1,-2,3} & x_{-1,2} & x_{1,-1,3} & x_{2,-2} & x_{2,-3} & x_2\\
x_{1,2} & x_{1,2,-3} & x_{1,2,-2} & x_{1,-3} & x_{1,-1,2} & x_{-2,3} & x_{3,-3} & x_3\\
x_{1,2,3} & x_{-1,-2} & x_{-1,-3} & x_{-1} & x_{-2,-3} & x_{-2} & x_{-3} & d
\end{pmatrix}
$$
It would be tempting to conjecture that the determinant of the above matrix 
has a straightforward decomposition of the form $(u+v)(u-v)$. We found that 
it is the case for the substitution $x_{a_1} = x_{a_1,a_2} = 0$ with $a_1, a_2 \in \{-3,-2,-1,1,2,3\}$ (see Appendix). However in general, 
the preliminary calculation suggests that $\det G_n^{F_{0,3}}$ may be 
an irreducible polynomial.\\

Finally, we observe that many results we have proven for 
$\det G_n^{F_{0,2}}$ holds for general $\det G_n^{F_{0,k}}$. For example, 
$\det G_n^{F_{0,k}}\neq 0$ and $\det G_n^{F_{0,k-1}}|\det G_n^{F_{0,k}}$.
 In the specific case of $\det G_n^{F_{0,3}}$ we conjecture that the 
diagonal term is of the form 
$\delta(n) =d^{\alpha(n)}(x_{1,-1}x_{2,-2}x_{3,-3})^{\beta(n)}$, where 
$\alpha(n)+3\beta(n)=n(n+1)^2{2n\choose n}$ and $\beta(n)= n(n+1)4^{n-1}$.

\subsection{Speculation on factorization of $\det G_n$}

Section 5 establishes that $\det G_{n-1} | \det G_n$, but we conjecture 
that there are many more powers of $\det G_{n-1}$ in $\det G_n$. 
Indeed, even in the base case, ${\det G_1}^k|\det G_2$ for $k$ up to $4$. 
Finding the maximal power of $\det G_{n-1}$ in $\det G_n$ in the general case 
is an open problem and can be helpful toward computing the full decomposition 
of $\det G_n$.\\

Examining the terms of highest degree in $\det G_n$, that is, $h(\det G_n)$ 
may also yield helpful hints toward the full decomposition. 
In particular, we note that:
$${\det G_1}^4 | h(\det G_2) \quad \text{and} \quad 
\left(\frac{{h(\det G_2)}^6}{{\det G_1}^9}\right) | h(\det G_3)$$

We can conjecture that
$$\left(\frac{{\det G_2}^6}{{\det G_1}^9}\right)|\det G_3$$
so it follows that ${\det G_1}^{15} | \det G_3$. 
We therefore offer the following conjecture:

\begin{conjecture}
${\det G_1}^{\binom{2n}{n-1}} | \det G_n$ for $n \ge 1$.
\end{conjecture}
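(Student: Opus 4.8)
The plan is to establish the stronger divisibility $(\det G_1)^{\binom{2n}{n-1}} \mid \det G_n$ by iterating the block-decomposition technique of Sections 5 and 6, keeping careful track of how many copies of $\det G_{n-1}$ can be extracted at each stage. First I would reexamine Theorem 5.1: the proof there identifies \emph{one} submatrix of $G_n$ equal to $G_{n-1}$ (the block $\langle i_0(\textbf{B}_{n-1}), i_0(\textbf{B}_{n-1})\rangle$ after row operations). The key observation I would pursue is that the conjugated embeddings $i_k$ for $k = 0, 1, \ldots, 2n-1$ (or at least a suitably large independent subfamily) produce \emph{many} disjoint copies of $G_{n-1}$, up to multiplication of rows/columns by elements of $\mathcal{A} = \{1, d, x_1, y_1, z_2\}$, exactly as in Corollary 5.1 and Lemma 6.1. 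Counting how many genuinely independent such blocks survive simultaneous row/column reduction should yield a power of $\det G_{n-1}$ dividing $\det G_n$; the target exponent $\binom{2n}{n-1}/\binom{2n-2}{n-2}$ between consecutive levels suggests the right count at each stage.

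The key steps, in order, would be: (i) prove that for each $k$, the rows $\langle i_k(\textbf{B}_{n-1}), i_0(\textbf{B}_{n-1})\rangle$ (and analogous column families) are, up to scaling by $\mathcal{A}$, copies of rows of $G_{n-1}$ — a direct generalization of Corollary 5.1 using Proposition 2.1 and the Dehn-twist description of $i_k$; (ii) show that after reordering $\textbf{B}_n$, one can simultaneously clear a large collection of blocks so that $\det G_n$ factors (up to a power of $(1-d^2)$, as in Theorem 6.1) as $(\det G_{n-1})^{m_n}$ times the determinant of a residual matrix, for an explicit $m_n$; (iii) verify $m_n \ge \binom{2n}{n-1}/\binom{2n-2}{n-2}$, or whatever ratio makes the induction close; (iv) run the induction from the base case $\det G_1$, which by Example 5.1 is itself $h(\det G_1)$ of degree $4$. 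The inductive hypothesis $(\det G_1)^{\binom{2n-2}{n-2}} \mid \det G_{n-1}$ combined with $(\det G_{n-1})^{m_n} \mid \det G_n$ gives $(\det G_1)^{m_n \binom{2n-2}{n-2}} \mid \det G_n$, and one checks $m_n\binom{2n-2}{n-2} \ge \binom{2n}{n-1}$.

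The main obstacle, which I expect to be genuinely hard, is controlling the residual $(1-d^2)$-powers and, more seriously, ensuring the multiple copies of $G_{n-1}$ are \emph{simultaneously} extractable — that is, that the row operations used to clear one block do not destroy another. In Theorem 6.1 only two embedding families ($i_0$ and $i_1$) are used and already a stray factor $(1-d^2)^{-k}$ of unspecified degree appears; with $2n$ families the bookkeeping of which rows/columns are "used up" and whether the surviving submatrix still has full-rank blocks becomes delicate. A cleaner route to sidestep this might be to work purely with the top-degree part $h(\det G_n)$ first — where Proposition 4.1 gives the clean block factorization $\prod_k \det\langle A_k, A_k\rangle$ over Catalan states — establish $(\det G_1)^{\binom{2n}{n-1}} \mid h(\det G_n)$ there by counting how each Catalan block decomposes under the embeddings, and then argue that a divisibility of the top-degree part that is "stable" (survives specialization, or lifts because $\det G_1$ is irreducible in the relevant localization) forces the same divisibility of $\det G_n$ itself. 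Making that lifting argument rigorous — essentially showing $\det G_1$ and its powers are not "accidentally" appearing only in top degree — is where the real work lies; the combinatorial exponent count, by contrast, should be routine once the block structure is pinned down.
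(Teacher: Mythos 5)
You should first be aware that the statement you set out to prove is Conjecture 1 of the paper: the authors do \emph{not} prove it, and support it only with computations for small $n$ (the divisibility ${\det G_1}^4 \mid \det G_2$, and a top-degree computation from which ${\det G_1}^{15}\mid \det G_3$ would follow \emph{only if} a further conjectural divisibility held). So there is no proof in the paper to compare against, and your proposal must stand on its own as a complete argument --- which it does not. The central gap is your step (ii), the simultaneous extraction of $m_n$ copies of $\det G_{n-1}$ with $m_n\binom{2n-2}{n-2}\ge\binom{2n}{n-1}$. Theorem 6.1 of the paper extracts only \emph{two} copies, and even then only up to an uncontrolled power of $(1-d^2)$; the paper explicitly states that removing that power, and more generally determining the maximal power of $\det G_{n-1}$ dividing $\det G_n$, are open problems. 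Your induction already needs $m_3\ge 4$ (since $\binom{6}{2}/\binom{4}{1}=15/4$). Worse, the column families $i_k(\textbf{B}_{n-1})$ for $k=0,\dots,2n-1$ are not pairwise disjoint (a state with outermost chords at $a_{k-1}a_k$ and at $a_{j-1}a_j$, with the four endpoints distinct, lies in both images), and for $n\ge 3$ they cannot be disjoint for cardinality reasons alone: $2n\,|\textbf{B}_{n-1}| = 2n^2\binom{2n-2}{n-1}$ exceeds $|\textbf{B}_n|=(n+1)\binom{2n}{n}$ whenever $n^3>(n+1)(2n-1)$, i.e.\ for all $n\ge 3$. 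The paper's own heuristic $\bigl({\det G_2}^{\,6}/{\det G_1}^{\,9}\bigr)\mid\det G_3$ signals the same phenomenon: the copies of $G_{n-1}$ one finds overlap in their $G_{n-2}$-content, so an independent-block count overcounts, and nothing in your sketch certifies that the row operations clearing one block leave the others intact.

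Your fallback through $h(\det G_n)$ does not close the gap either. Proposition 4.1 gives the clean block factorization of the top-degree part, and the paper indeed verifies ${\det G_1}^4\mid h(\det G_2)$ and the analogous statement for $h(\det G_3)$; but the passage from divisibility of $h(\det G_n)$ to divisibility of $\det G_n$ is precisely the inference the authors decline to make (``We can conjecture that\dots''). Divisibility of the top-degree truncation of a non-homogeneous polynomial by a homogeneous polynomial does not imply divisibility of the full polynomial (already $x^2\mid h(x^2+x)$ while $x^2\nmid x^2+x$), and the irreducibility of the degree-two factors of $\det G_1$ does not by itself repair this. So the ``lifting'' step you flag as the real work is not a matter of bookkeeping --- it is the open problem itself, and as written the proposal is a research program rather than a proof; the statement remains a conjecture.
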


In addition, we also offer the following conjecture, motivated by observations 
of $\det G_1$ and $\det G_2$:

\begin{conjecture}
Let $H_n$ denote the factors of $\det G_n$ not in $\det G_{n-1}$, that is, $H_n | \det G_n$ and $\gcd(H_n, \det G_{n-1}) = 0$. Then $(H_{n-1})^{2n} | \det G_n$.
\end{conjecture}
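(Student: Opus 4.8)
We outline the approach we would take; as the authors themselves flag this as open, the proposal isolates the natural line of attack and the point where the real work lies.

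The plan is to reduce Conjecture 7.2 to a cleaner divisibility generalizing Theorem 6.1: that there is an integer $K\ge 0$ with ${\det G_{n-1}}^{2n}\mid \det G_n\,(1-d^2)^K$. Granting this, the conjecture follows as soon as $1-d^2$ is coprime to $\det G_{n-1}$ — hence to its factor $H_{n-1}$ — so that the spurious powers of $1-d^2$ (and any further denominators introduced by the elimination below, which will be coprime to $H_{n-1}$ for the same reason) cancel. The coprimality is proved by showing $\det G_{n-1}$ does not vanish under $d\mapsto 1$ (and under $d\mapsto -1$): one specializes the remaining variables so that a single product of diagonal entries survives, exactly as in the leading-monomial argument of Theorem 3.1. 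Note that for $n=2$ this program \emph{is} the whole statement, since $H_1=\det G_1$ because $\det G_0=1$, and the authors have already observed ${\det G_1}^4\mid \det G_2$ numerically.

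The engine for producing the $2n$ copies is the family of conjugated embeddings $i_0,i_1,\ldots,i_{2n-1}\colon \textbf{B}_{n-1}\to\textbf{B}_n$. First I would prove twisted analogues of Lemmas 6.1 and 6.2: for each $k$,
$$\langle i_k(\textbf{B}_{n-1}),i_k(\textbf{B}_{n-1})\rangle = d\,\sigma_k(G_{n-1}),\qquad \langle i_k(\textbf{B}_{n-1}),i_{k+1}(\textbf{B}_{n-1})\rangle = \sigma_k(G_{n-1}),$$
where $\sigma_k$ is a substitution of variables lying in the group generated by the involutions of Theorems 3.2 and 3.3, so that $\det\sigma_k(G_{n-1})=\pm\det G_{n-1}$ and $\sigma_k$ carries the new part $H_{n-1}$ to an associate of itself. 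The proof conjugates the $k=0$ identities — which are precisely Lemmas 6.1 and 6.2 via Proposition 2.1 — by the Dehn twist $r_{\pi/n}$; since $r_{\pi/n}$ fixes the disk of radius $R'$ enclosing $X_1$ and $Y_1$ it never moves the holes, so its only effect on a glued diagram $b_i\circ{b_j}^*$ is a controlled rewinding of the outer arcs, which one checks amounts to one of the known symmetries of $R$. Consecutive embeddings $i_k,i_{k+1}$ have disjoint images (in $i_k(b)$ the point $a_{k-1}$ is joined to $a_k$, hence not to $a_{k-2}$), which is what makes the off-diagonal blocks above well defined.

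The assembly step is where the difficulty lies, and is the step I expect to resist a short proof. The images $i_0(\textbf{B}_{n-1}),\ldots,i_{2n-1}(\textbf{B}_{n-1})$ are \emph{not} pairwise disjoint — indeed $2n\,|\textbf{B}_{n-1}|>|\textbf{B}_n|$ as soon as $n\ge 3$ — so there is no honest $2n\times 2n$ block form of $G_n$ with $\sigma_k(G_{n-1})$ on the diagonal, and the Schur-complement argument of Theorem 6.1 (which used only the disjoint pair $i_0,i_1$ and produced exponent $2$) does not iterate directly. I see two routes. The first is to show that every collision $i_k(b)=i_\ell(b')$ with $\ell\ne k\pm 1$ involves only states whose pairing against all of $\textbf{B}_{n-1}$ already factors through $\textbf{B}_{n-2}$, i.e. the overlaps are confined to the old factors of $\det G_{n-1}$; then, after passing to the Schur complement that removes those old factors, $2n$ genuinely disjoint copies of the reduced matrix — whose determinant is $H_{n-1}$ up to old factors and powers of $1-d^2$ — do fit inside $G_n$. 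The second is to abandon block decompositions: for each irreducible $q\mid H_{n-1}$, localize at the height-one prime $(q)$ and show directly that the $q$-adic valuation satisfies $v_q(\det G_n)\ge 2n\,v_q(\det G_{n-1})$, exhibiting modulo $q$ a rank drop of the needed size from the $2n$ near-disjoint embedding blocks (near-disjointness suffices because $v_q$ is only mildly affected by the row operations involved). Either way, the crux is the combinatorial control of the collisions $i_k(b)=i_\ell(b')$: one wants a filtration of $\textbf{B}_n$ by the number of boundary-parallel trivial chords a state admits, with an induction on $n$ nested inside, certifying that overlaps never touch the new part $H_{n-1}$. I do not see how to avoid a rather delicate analysis at this point.
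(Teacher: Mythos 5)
The statement you are addressing is presented in the paper as a conjecture: the authors give no proof, only the numerical observation that ${\det G_1}^4\mid\det G_2$ and the pattern extracted from $h(\det G_3)$. So there is no argument of theirs to compare against, and the only question is whether your program establishes the claim. It does not, and you say so yourself: the "assembly step" — controlling the collisions $i_k(b)=i_\ell(b')$ among the $2n$ overlapping images of $\textbf{B}_{n-1}$ in $\textbf{B}_n$ — is left open, and it is precisely the combinatorial core of the problem. Everything before that point (the rotated analogues of Lemmas 6.1 and 6.2, the observation that consecutive images are disjoint, the count $2n\,|\textbf{B}_{n-1}|>|\textbf{B}_n|$ showing a naive $2n$-fold block decomposition cannot exist) is plausible and consistent with Section 6, but it only reproduces, $2n$ times over, the input that Theorem 6.1 already uses twice; the conjecture remains a conjecture under your proposal.

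There is also a concrete error in the headline reduction. You propose to prove ${\det G_{n-1}}^{2n}\mid\det G_n\,(1-d^2)^K$ and then remove the $(1-d^2)^K$ by coprimality. This intermediate statement is strictly stronger than the conjecture and is already refuted at $n=3$ by the paper's own computations. Indeed $\det G_1=w\bar w$ is homogeneous (Example 4.1) with $w,\bar w$ irreducible of degree $2$, and ${\det G_1}^4\mid\det G_2$, so ${\det G_2}^6$ is divisible by $w^{24}$. Since $w$ is homogeneous, not an associate of $1\pm d$, and the top-degree part of a product is the product of top-degree parts, $w^{24}\mid\det G_3\,(1-d^2)^K$ would force $w^{24}\mid h(\det G_3)$; but the appendix exhibits $w$ with multiplicity exactly $15$ in $h(\det G_3)$. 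This is exactly why the authors phrase their $n=3$ speculation as $\left({\det G_2}^6/{\det G_1}^{9}\right)\mid\det G_3$, dividing out the excess old factors, and why the conjecture is stated only for the new part $H_{n-1}$. Your "first route" in the assembly step implicitly corrects this, but then the opening reduction-plus-coprimality scheme is not the statement you need, and any viable argument must separate $H_{n-1}$ from the old factors from the start rather than aim at all of ${\det G_{n-1}}^{2n}$. A smaller gap of the same kind: the claimed coprimality of $1-d^2$ with $\det G_{n-1}$ is not delivered by the Theorem 3.1 argument as stated, because after specializing $d\mapsto 1$ and the other variables to $0$ there remain off-diagonal entries that are monomials in $d$ and $z_1$ alone (they are merely of non-maximal degree, e.g. $\langle b_6,b_2\rangle=dz_1$ in Figure 5), so the specialized matrix is not triangular and its nonvanishing still requires an argument.
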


\begin{conjecture}\label{Conjecture 3}
Let, as before, $R= \mathbb{Z} [d,x_1,x_2,y_1,y_2,z_1,z_2,z_3]$ and 
$R_1$ be a subgroup of $R$ of elements invariant under $h_1,h_2,h_t$, and 
$g_1,g_2,g_3$. Similarly, let $R_2$ be a subgroup of $R$ composed of 
elements $w\in R$ such that $h_1(w) = h_2(w) = -w$ and 
$h_t(w)=g_1(w)=g_2(w)=g_3(w)$. Then
\begin{enumerate}
\item[(1)] $\det G_n = u^2 - v^2$, where $u\in R_1$ and $v\in R_2$.
\item[(2)] $\det G_n = \prod_{\alpha} (u_{\alpha}^2 - v_{\alpha}^2)$, 
where $u_{\alpha} \in R_1$ and $v_{\alpha} \in R_2$, and 
$u_{\alpha} - v_{\alpha}$ and $u_{\alpha} + v_{\alpha}$ are 
irreducible polynomials.
\item[(3)] $\det G_n = \prod_{i=1}^n ({u_i}^2-{v_i}^2)^{\binom{2n}{n-i}}$, 
where $u_i \in R_1$ and $v_i \in R_2$.
\end{enumerate}
Notice that if $w_1=u_1^2-v_1^2$ and $w_2=u_2^2-v_2^2$, then 
$w_1w_2 = (u_1u_2 + v_1v_2)^2- (u_1v_2+ u_2v_1)^2$.
\end{conjecture}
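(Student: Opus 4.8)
The plan is to separate the three parts, treating (1) as the analytic core and deriving (2) and (3) from it together with the divisibility results of Sections 5 and 6. For part (1), I would first record the symmetry constraints. Theorems 3.2 and 3.3 say that $\det G_n$ is fixed by $h_1, h_2, h_t$ and by $g_1,g_2,g_3$ (for $n>1$; for $n=1$ the sign anomalies of Theorem 3.2(1),(2) must be tracked, but $\det G_1$ is already exhibited in the desired form in the introduction, so $n=1$ is a base case to be checked by hand). Write $P_n = \det G_n$. The group generated by $h_1,h_2,h_t,g_1,g_2,g_3$ acts on $R$, and $R_1$ is by definition its invariant subring. Since $P_n \in R_1$, I would look for a decomposition $P_n = u^2 - v^2 = (u+v)(u-v)$ by finding a single involution $\sigma$ in (or associated to) this group for which $u = \tfrac12(P$-factor sum$)$ makes sense — concretely, $h_1$ and $h_2$ act on $\det G_n$ trivially for $n>1$ but act on the matrix $G_n$ by an \emph{odd} permutation paired with a variable substitution, which is exactly the mechanism that produced the factorization of $\det G_1$ displayed in the introduction. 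So the key step is: exhibit a matrix identity $h_1(G_n) = Q_n G_n Q_n^{-1}$ up to an explicit signed permutation, deduce that $\det(G_n - \text{something}) $ factors, and read off $u \in R_1$, $v$ in the $(-1)$-eigenspace $R_2$. The definition of $R_2$ — elements anti-invariant under $h_1,h_2$ but with $h_t=g_1=g_2=g_3$ acting equally — is precisely the space in which the "cross term" $v$ of such a factorization must live, so this is more a matter of verifying membership than of discovery.

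For part (3), I would combine part (2) with Theorem 5.1 and Conjecture~7.1. Theorem 5.1 gives $\det G_{n-1} \mid \det G_n$; Conjectures 7.1 and 7.2 (if one is willing to assume them, or prove the needed special case) upgrade this to the precise multiplicities $\binom{2n}{n-i}$ by induction on $n$: the factor $u_i^2 - v_i^2$ for $i<n$ is inherited from $\det G_{n-1}$ with its exponent shifted according to the Pascal-type recursion $\binom{2n}{n-i} = \binom{2n-2}{n-1-i} \cdot (\text{something})$, while the genuinely new factor is $u_n^2 - v_n^2$ with exponent $\binom{2n}{0}=1$. The identity $w_1 w_2 = (u_1u_2+v_1v_2)^2 - (u_1v_2+u_2v_1)^2$ noted at the end of the conjecture is what guarantees the family $R_1$-squares-minus-$R_2$-squares is closed under multiplication, so that the product form in (3) is internally consistent with (1); I would use it to reduce (1) to (3) by induction and vice versa. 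Part (2) is then the assertion that the individual factors $u_\alpha \pm v_\alpha$ appearing in (3) (after grouping equal ones) are irreducible — this would follow from a degree count plus an irreducibility check on the new factor $u_n - v_n$ at each stage, perhaps by specializing variables (as the paper does for $\det G_1^{F_{0,3}}$) to a known-irreducible polynomial.

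The main obstacle, by far, is producing the factorization in part (1) for general $n$ rather than just $n=1$. For $n=1$ one simply stares at the $4\times 4$ matrix; for general $n$ one needs a structural reason. I expect the right tool is the action of $h_1$ (equivalently the Dehn-twist / hole-swap symmetry) realized as a signed permutation matrix $S_n$ with $S_n^2 = \pm I$: then $G_n$ and $h_1(G_n) = S_n^{-1} G_n S_n$ (up to the variable swap) have the same determinant, and block-diagonalizing $S_n$ splits $G_n$ into two blocks whose determinants are the Galois-conjugate factors $u+v$ and $u-v$. Making this precise requires understanding the orbit structure of $h_1$ on $\textbf{B}_n$ — which states have $X_1, Y_1$ in the same region (fixed points) versus swapped — and checking that the induced block decomposition is compatible with the variable involution, i.e.\ that conjugating by $S_n$ turns $G_n$ into $h_1(G_n)$ on the nose. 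The parity computation $\tfrac{n(n+1)}2 C_n$ from the proof of Theorem 3.2(1) already shows the combinatorics is delicate, and I would anticipate that getting the eigenspace decomposition of $S_n$ to interact correctly with the $R$-module structure is where most of the work lies; the verification that the resulting $u,v$ land in $R_1, R_2$ respectively should then be a routine check against the explicit variable actions listed in Theorems 3.2 and 3.3.
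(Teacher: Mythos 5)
You were asked to prove what the paper itself presents only as a conjecture (Conjecture \ref{Conjecture 3}): the paper contains no proof of this statement, and the authors explicitly write that they ``have little confidence in Conjecture 3(3)''. So there is no argument of the paper to compare yours against; the only question is whether your sketch could plausibly be completed, and as written it cannot.

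The central gap is in your mechanism for part (1). You propose to realize $h_1$ as a signed permutation $S_n$ with $h_1(G_n)=S_n^{-1}G_nS_n$ ``up to the variable swap'' and then block-diagonalize $G_n$ along the eigenspaces of $S_n$. But the proof of Theorem 3.2(1) shows that $h_1$ is realized by a permutation of \emph{rows only} (the column states ${b_j}^*$ are untouched), so there is no conjugation available there; and even for $h_3=h_1h_2$, which genuinely acts by conjugation by a permutation matrix $S_n$, one gets only $S_nG_nS_n^{-1}=h_3(G_n)$, which differs from $G_n$ by a nontrivial substitution of variables. Eigenspaces of $S_n$ split a matrix into blocks only when the matrix \emph{commutes} with $S_n$ on the nose, and $G_n$ does not. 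The $n=1$ factorization you want to imitate comes from the exact, substitution-free centrosymmetry $JG_1J=G_1$ ($J$ the antidiagonal permutation of the $4\times 4$ matrix), and it is precisely the existence of such an exact symmetry for $n\ge 2$ that would have to be discovered and proved --- this is the whole content of the conjecture, not a routine verification. Secondary problems: your route to part (3) assumes Conjectures 1 and 2 of Section 7, which are themselves open, so the argument is conditional at best; the explicit factorization of $\det G_2$ in the Appendix (an overall $-d^2$, the factor $(\det G_1)^4$, a squared cubic, and two large unrepeated factors) does not visibly match the shape $\prod_{i}(u_i^2-v_i^2)^{\binom{2n}{n-i}}$, which is exactly why the authors distrust part (3), and any proposed proof must first reconcile itself with that data point; and the irreducibility claim in part (2) is not settled by ``a degree count plus a specialization,'' since specializing variables can only certify irreducibility under degree-preservation hypotheses you do not supply.
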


We have little confidence in Conjecture 3(3). 
It is closely, maybe too closely, influenced by the Gram determinant of 
type B ($\det G_n^B= \det G_n^{F_{0,1}}$).
That is
\begin{theorem}(\cite{Che,MS})
$$\det G_n^B= \prod_{i=1}^n \left( T_i(d)^2 - a^2 \right)^{\binom{2n}{n-i}}
$$
where $T_i(d)$ is the Chebyshev polynomial of the first kind:
$$
T_0 = 2, \qquad T_1 = d, \qquad T_i = d\, T_{i-1} - T_{i-2};
$$
$d$ and $a$ in the formula, correspond to the trivial and the nontrivial 
curves in the annulus $F_{0,1}$, respectively.
\end{theorem}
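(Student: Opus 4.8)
The plan is to follow the strategy that gives closed formulas for these Temperley--Lieb type Gram determinants, adapted to the annular case. First I would set up the induction on $n$ using the embedding $i_0 : \textbf{B}_{n-1,1} \to \textbf{B}_{n,1}$ (adjoin a chord near the outer boundary) together with its conjugates $i_k$, exactly as in Sections 5 and 6 above but now in $F_{0,1}$. The key structural input is a recursive change of basis: on the span of $i_0(\textbf{B}_{n-1,1})$ the form behaves like $d$ times $G_{n-1}^B$ (a trivial circle is created), while cross terms with $i_1(\textbf{B}_{n-1,1})$ reproduce $G_{n-1}^B$ itself, by the annular analogues of Corollary 6.1 and Corollary 6.2. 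Performing the row/column reduction of Theorem 6.1 in this setting peels off a factor of $\det G_{n-1}^B$ and reduces the problem to computing the determinant of a ``reduced'' matrix $\bar G_n^B$ on the complement. The heart of the argument is to show that this reduction, done carefully with the right choice of basis vectors (using Jones--Wenzl style idempotents or the ``cabling'' basis in the Temperley--Lieb algebra of type $B$), multiplies the determinant by exactly $\bigl(T_n(d)^2 - a^2\bigr)^{\binom{2n}{n-n}} = T_n(d)^2 - a^2$ at the top level, and that iterating gives the product $\prod_{i=1}^n (T_i(d)^2 - a^2)^{\binom{2n}{n-i}}$ with the stated binomial multiplicities.

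The steps, in order, would be: (1) Establish the annular versions of Lemma 6.1, Lemma 6.2 and their corollaries, so that $\langle i_0(\textbf{B}_{n-1,1}), i_0(\textbf{B}_{n-1,1})\rangle = d\,G_{n-1}^B$ and $\langle i_0(\textbf{B}_{n-1,1}), i_1(\textbf{B}_{n-1,1})\rangle = G_{n-1}^B$; (2) use these to block-decompose $G_n^B$ and carry out the determinant-preserving reduction, extracting one copy of $\det G_{n-1}^B$ and isolating $\bar G_n^B$ as in Theorem 6.1; (3) identify $\bar G_n^B$ explicitly — this is where the Chebyshev recursion $T_i = d\,T_{i-1} - T_{i-2}$ enters, since the partial trace / closure operation on the annulus that produces $\bar G_n^B$ replaces the ``boundary loop'' weight by the next Chebyshev polynomial, and the nontrivial annular curve contributes the additive constant $a$; (4) compute $\det \bar G_n^B$ as $\bigl(T_n(d)^2 - a^2\bigr)$ times (a power of) $\det G_{n-1}^B$ absorbed into the induction, and assemble the binomial exponents by the standard count $\dim \ker = \binom{2n}{n-i} - \binom{2n}{n-i-1}$ of the radical filtration; (5) verify the base case $n=1$, where $G_1^B = \begin{pmatrix} d & a \\ a & d \end{pmatrix}$ gives $\det G_1^B = d^2 - a^2 = T_1(d)^2 - a^2$.

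I expect step (3) — pinning down $\bar G_n^B$ and showing precisely why the Chebyshev polynomial of the first kind appears (rather than, say, the second kind or some shifted variant) — to be the main obstacle. It requires a clean diagrammatic description of the closure operation induced by the reduction in Theorem 6.1 when one winds a cabled strand around the hole, together with the observation that closing up the Jones--Wenzl projector $f_i$ around the annular core contributes the trace $T_i(d)$ on the trivial side and $a$-dependent terms on the nontrivial side; keeping track of signs and of the exact multiplicities $\binom{2n}{n-i}$ through the induction is the delicate bookkeeping. The remaining steps are either direct analogues of results already proven in this paper for $F_{0,2}$ or are routine linear algebra once the structural lemmas are in place. (For full details we refer to \cite{Che} and \cite{MS}.)
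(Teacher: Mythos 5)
First, a point of comparison: the paper does not prove this theorem at all --- it is quoted from \cite{Che} (see also \cite{MS}) as known input, so there is no in-paper argument to measure your proposal against. Judged on its own terms, your write-up is a roadmap rather than a proof, and the genuine gap sits exactly where you flag it: step (3), the identification of the reduced matrix $\bar G_n^B$ and the derivation of $T_i(d)$ (first kind, with $T_0=2$) as the weight of a closed Jones--Wenzl projector wound around the annular core, is asserted and then deferred back to \cite{Che} and \cite{MS}. Since that is the entire content of the theorem --- everything else is the generic Temperley--Lieb scaffolding --- the proposal does not yet constitute a proof.

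There is also a structural problem with the way steps (1)--(2) and step (4) are meant to fit together. The peeling argument of Sections 5--6 (Corollaries 6.1, 6.2 and Theorem 6.1), transplanted to $F_{0,1}$, yields at best $(\det G_{n-1}^B)^2(1-d^2)^k$ times the determinant of a complementary block; iterating it cannot by itself produce the exponents $\binom{2n}{n-i}$. Concretely, $\det G_2^B=(T_1^2-a^2)^4(T_2^2-a^2)$ while the reduction extracts only $(\det G_1^B)^2=(T_1^2-a^2)^2$, so the block $\bar G_2^B$ must still account for $(T_1^2-a^2)^2(T_2^2-a^2)$ up to powers of $(1-d^2)$; the exponents $\binom{2n}{n-i}$ and $\binom{2n-2}{n-1-i}$ do not telescope under this induction. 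Your step (4) gestures at the correct mechanism --- a filtration by the number of through strands, with the $i$-th graded piece contributing $(T_i(d)^2-a^2)$ raised to the dimension of the corresponding cell, as in the Di Francesco/Westbury computation for type $A$ and its type $B$ adaptation in \cite{Che} --- but that is a different argument from the row reduction of Theorem 6.1, and the proposal never reconciles the two or carries either one out. The base case $G_1^B=\begin{pmatrix} d & a\\ a & d\end{pmatrix}$ and the annular analogues of Lemmas 6.1 and 6.2 are fine; the theorem, however, still rests entirely on the cited sources.
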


\newpage
\begin{landscape}
\section{Appendix}
\subsection{}$\textbf{B}_2$
\begin{figure}[htbp]
\begin{center}
\includegraphics{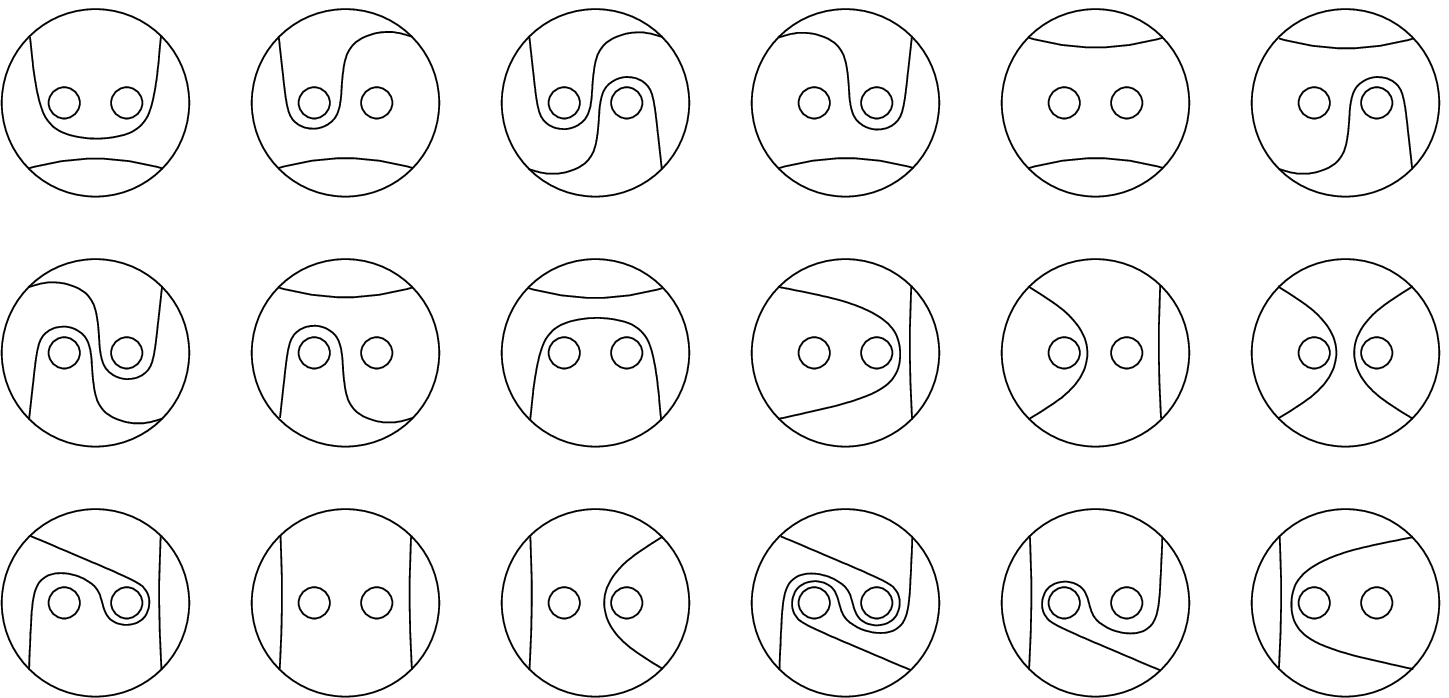}
\end{center}
\end{figure}

\subsection{}$G_2$
\begin{tiny}
\begin{eqnarray*}
\left(
\begin{array}{cccccccccccccccccc}
 d^2 & d y_2 & d x_2 & d z_2 & z_2 & x_2 & y_2 & d & y_2 z_2 & x_2 z_2 & {z_2}^2
& z_2 & x_2 & y_2 & {y_2}^2 & {x_2}^2 & z_2 & z_2 \\
 d y_1 & d z_1 & d z_3 & d x_1 & x_1 & z_3 & z_1 & y_1 & x_1 y_2 & x_1 x_2
& x_1 z_2 & x_1 & z_3 & z_1 & y_2 z_1 & x_2 z_3 & x_1 & x_1 \\
 d x_1 & d z_3 & d z_1 & d y_1 & y_1 & z_1 & z_3 & x_1 & y_1 y_2 & x_2 y_1
& y_1 z_2 & y_1 & z_1 & z_3 & y_2 z_3 & x_2 z_1 & y_1 & y_1 \\
 d z_2 & d x_2 & d y_2 & d^2 & d & y_2 & x_2 & z_2 & d y_2 & d x_2 & d z_2 & d & y_2
& x_2 & x_2 y_2 & x_2 y_2 & d & d \\
 z_2 & x_2 & y_2 & d & d^2 & d y_2 & d x_2 & d z_2 & y_2 & x_2 & z_2 & {z_2}^2 & x_2
z_2 & y_2 z_2 & z_2 & z_2 & {x_2}^2 & {y_2}^2 \\
 x_1 & z_3 & z_1 & y_1 & d y_1 & d z_1 & d z_3 & d x_1 & z_1 & z_3 & x_1 & x_1
z_2 & x_1 x_2 & x_1 y_2 & x_1 & x_1 & x_2 z_3 & y_2 z_1 \\
 y_1 & z_1 & z_3 & x_1 & d x_1 & d z_3 & d z_1 & d y_1 & z_3 & z_1 & y_1 & y_1
z_2 & x_2 y_1 & y_1 y_2 & y_1 & y_1 & x_2 z_1 & y_2 z_3 \\
 d & y_2 & x_2 & z_2 & d z_2 & d x_2 & d y_2 & d^2 & x_2 & y_2 & d & d z_2 & d x_2 &
d y_2 & d & d & x_2 y_2 & x_2 y_2 \\
 y_1 z_2 & x_2 y_1 & y_1 y_2 & d y_1 & y_1 & z_1 & z_3 & x_1 & d z_1 & d
z_3 & d x_1 & y_1 & z_1 & z_3 & x_2 z_1 & y_2 z_3 & y_1 & y_1 \\
 x_1 z_2 & x_1 x_2 & x_1 y_2 & d x_1 & x_1 & z_3 & z_1 & y_1 & d z_3 & d
z_1 & d y_1 & x_1 & z_3 & z_1 & x_2 z_3 & y_2 z_1 & x_1 & x_1 \\
 {z_2}^2 & x_2 z_2 & y_2 z_2 & d z_2 & z_2 & x_2 & y_2 & d & d x_2 & d y_2 & d^2
& z_2 & x_2 & y_2 & {x_2}^2 & {y_2}^2 & z_2 & z_2 \\
 z_2 & x_2 & y_2 & d & {z_2}^2 & x_2 z_2 & y_2 z_2 & d z_2 & y_2 & x_2 & z_2
& d^2 & d y_2 & d x_2 & z_2 & z_2 & {y_2}^2 & {x_2}^2 \\
 x_1 & z_3 & z_1 & y_1 & x_1 z_2 & x_1 x_2 & x_1 y_2 & d x_1 & z_1 & z_3
& x_1 & d y_1 & d z_1 & d z_3 & x_1 & x_1 & y_2 z_1 & x_2 z_3 \\
 y_1 & z_1 & z_3 & x_1 & y_1 z_2 & x_2 y_1 & y_1 y_2 & d y_1 & z_3 & z_1
& y_1 & d x_1 & d z_3 & d z_1 & y_1 & y_1 & y_2 z_3 & x_2 z_1 \\
 {y_1}^2 & y_1 z_1 & y_1 z_3 & x_1 y_1 & z_2 & x_2 & y_2 & d & x_1 z_1 & x_1
z_3 & {x_1}^2 & z_2 & x_2 & y_2 & {z_1}^2 & {z_3}^2 & z_2 & z_2 \\
 {x_1}^2 & x_1 z_3 & x_1 z_1 & x_1 y_1 & z_2 & x_2 & y_2 & d & y_1 z_3 & y_1
z_1 & {y_1}^2 & z_2 & x_2 & y_2 & {z_3}^2 & {z_1}^2 & z_2 & z_2 \\
 z_2 & x_2 & y_2 & d & {x_1}^2 & x_1 z_3 & x_1 z_1 & x_1 y_1 & y_2 & x_2 &
z_2 & {y_1}^2 & y_1 z_1 & y_1 z_3 & z_2 & z_2 & {z_1}^2 & {z_3}^2 \\
 z_2 & x_2 & y_2 & d & {y_1}^2 & y_1 z_1 & y_1 z_3 & x_1 y_1 & y_2 & x_2 &
z_2 & {x_1}^2 & x_1 z_3 & x_1 z_1 & z_2 & z_2 & {z_3}^2 & {z_1}^2
\end{array}
\right)
\end{eqnarray*}
\end{tiny}

\newpage
\subsection{}$\bar{G_2}$ (defined in Theorem 6.1), after simplification
\begin{tiny}
\begin{eqnarray*}
\left(
\begin{array}{cccccccccc}
 x_2 y_2-d z_2 & -(-1+d) d (1+d) & -d (-1+y_2) (1+y_2) & -d (-1+z_2) (1+z_2) & -(-1+d) (1+d) y_2
\\
 0 & -(d-y_1) (d+y_1) & -(y_2-z_1) (y_2+z_1) & (x_1-z_2) (x_1+z_2) & -d y_2+y_1
z_1\\
 0 & -(d-z_2) (d+z_2) & (x_2-y_2) (x_2+y_2) & (d-z_2) (d+z_2) & -d y_2+x_2 z_2
\\
 -d x_1+x_2 z_1-y_1 z_2+y_2 z_3 & -2 (-1+d) (1+d) y_1 & 2 y_1-y_1 {y_2}^2-d y_2
z_1 & 2 y_1-d x_1 z_2-y_1 {z_2}^2 & -d y_1 y_2+2 z_1-d^2 z_1  \\
 0 & -d y_1+x_1 z_2 & -y_2 z_1+x_2 z_3 & d y_1-x_1 z_2 & x_1 x_2-d z_1
\\
 -{x_1}^2-{y_1}^2+{z_1}^2+{z_3}^2 & -2 (d x_1 y_1-z_2) & -x_1 y_2 z_1+2 z_2-y_1 y_2
z_3 & -(-2+{x_1}^2+{y_1}^2) z_2 & 2 x_2-d x_1 z_1-d y_1 z_3 \\
 -d y_1+y_2 z_1-x_1 z_2+x_2 z_3 & -2 (-1+d) (1+d) x_1 & 2 x_1-x_1 {y_2}^2-d y_2
z_3 & 2 x_1-d y_1 z_2-x_1 {z_2}^2 & -d x_1 y_2+2 z_3-d^2 z_3\\
 0 & -d x_1+y_1 z_2 & x_2 z_1-y_2 z_3 & d x_1-y_1 z_2 & x_2 y_1-d z_3\\
 -d^2+{x_2}^2+{y_2}^2-{z_2}^2 & -2 (-1+d) (1+d) z_2 & -d x_2 y_2+2 z_2-{y_2}^2 z_2 & -z_2 (-2+d^2+{z_2}^2)
& 2 x_2-d^2 x_2-d y_2 z_2\\
 0 & -(d-x_1) (d+x_1) & -(y_2-z_3) (y_2+z_3) & (y_1-z_2) (y_1+z_2) & -d y_2+x_1
z_3
\end{array}
\right.
\\
\left.
\begin{array}{cccccccccc}
 y_2-d x_2 z_2 & -(-1+d) (1+d) x_2 & x_2-d y_2 z_2 & -(-1+d) (1+d) z_2 & -d (-1+x_2) (1+x_2)
\\
-x_2 z_2+x_1 z_3 & -d x_2+y_1 z_3 & x_1 z_1-y_2 z_2 & x_1 y_1-d
z_2 & -(x_2-z_3) (x_2+z_3) \\
 d y_2-x_2 z_2 & -d x_2+y_2 z_2 & d x_2-y_2 z_2 & 0 & -(x_2-y_2) (x_2+y_2)
\\
  -d x_1 x_2+2
z_1-x_2 y_1 z_2 & -d x_2 y_1+2 z_3-d^2 z_3 & -d x_1 y_2-y_1 y_2 z_2+2
z_3 & 2 x_1-d^2 x_1-d y_1 z_2 & 2 y_1-{x_2}^2 y_1-d x_2 z_3 \\
-x_1 x_2+d z_1 & x_1 y_2-d z_3 & -x_1 y_2+d z_3 & 0 & y_2 z_1-x_2 z_3
\\
 -x_2 (-2+{x_1}^2+{y_1}^2)
& 2 y_2-d y_1 z_1-d x_1 z_3 & -(-2+{x_1}^2+{y_1}^2) y_2 & -d (-2+{x_1}^2+{y_1}^2)
& -x_2 y_1 z_1+2 z_2-x_1 x_2 z_3 \\
 -d x_2 y_1-x_1
x_2 z_2+2 z_3 & -d x_1 x_2+2 z_1-d^2 z_1 & -d y_1 y_2+2 z_1-x_1 y_2 z_2
& 2 y_1-d^2 y_1-d x_1 z_2 & 2 x_1-x_1 {x_2}^2-d x_2 z_1 \\
 -x_2 y_1+d z_3 & y_1 y_2-d z_1 & -y_1 y_2+d z_1 & 0 & -x_2 z_1+y_2 z_3
\\
 -x_2 (-2+d^2+{z_2}^2) & 2 y_2-d^2 y_2-d x_2 z_2
& -y_2 (-2+d^2+{z_2}^2) & -d (-2+d^2+{z_2}^2) & -d x_2 y_2+2 z_2-{x_2}^2 z_2 \\
y_1 z_1-x_2 z_2 & -d x_2+x_1 z_1 & -y_2 z_2+y_1 z_3 & x_1 y_1-d
z_2 & -(x_2-z_1) (x_2+z_1)
\end{array}
\right)
\end{eqnarray*}
\end{tiny}
$$\det \bar{G_2} = \frac{\det G_2}{(1-d^2)^4{\det G_1}^2}$$

\subsection{}$\det G_2$
\begin{tiny}
\begin{eqnarray*}
\det G_2 &= &-d^2 (-x_1 x_2+x_2 y_1+x_1 y_2-y_1 y_2+d z_1-z_1 z_2-d z_3+z_2 z_3)^4(-x_1 x_2-x_2 y_1-x_1 y_2-y_1 y_2+d z_1+z_1 z_2+d z_3+z_2 z_3)^4\\
&&(-x_1 x_2 z_1-y_1 y_2 z_1+d {z_1}^2+x_2 y_1 z_3+x_1 y_2 z_3-d {z_3}^2)^2\\
&&(8 d^2-2 d^4-8 {x_1}^2+2 d^2 {x_1}^2-8 {x_2}^2+2 d^2 {x_2}^2+2 {x_1}^2 {x_2}^2+8 d x_1 y_1-2 d^3 x_1y_1- 2 d x_1 {x_2}^2 y_1-8 {y_1}^2+2 d^2 {y_1}^2+2 {x_2}^2 {y_1}^2+\\
&&8 d x_2 y_2-2 d^3 x_2 y_2-2d {x_1}^2 x_2y_2+2 d^2 x_1 x_2 y_1 y_2-2 d x_2 {y_1}^2 y_2-8 {y_2}^2+2 d^2 {y_2}^2+2{x_1}^2 {y_2}^2-2 d x_1 y_1 {y_2}^2+2 {y_1}^2 {y_2}^2+\\
&&2 d x_1 x_2 z_1-d^3 x_1 x_2 z_1-4x_2 y_1 z_1+2 d^2 x_2 y_1 z_1-4 x_1 y_2 z_1+2 d^2 x_1 y_2 z_1+2 d y_1y_2 z_1-d^3 y_1 y_2 z_1+8 {z_1}^2-6 d^2 {z_1}^2+d^4 {z_1}^2-\\
&&8 d^2 z_2+2 d^4 z_2+d x_1x_2 z_1 z_2-2 x_2 y_1 z_1 z_2-2 x_1 y_2 z_1 z_2+d y_1 y_2 z_1z_2+4 {z_1}^2 z_2-d^2 {z_1}^2 z_2+8 {z_2}^2-2 d^2 {z_2}^2-4 x_1 x_2 z_3+\\
&&2 d^2 x_1 x_2z_3+2 d x_2 y_1 z_3-d^3 x_2 y_1 z_3+2 d x_1 y_2 z_3-d^3 x_1 y_2 z_3-4y_1 y_2 z_3+2 d^2 y_1 y_2 z_3-2 x_1 x_2 z_2 z_3+d x_2 y_1 z_2 z_3+dx_1 y_2 z_2 z_3-\\
&&2 y_1 y_2 z_2 z_3+8 {z_3}^2-6 d^2 {z_3}^2+d^4 {z_3}^2+4 z_2 {z_3}^2-d^2z_2 {z_3}^2)\\
&&(8 d^2-2 d^4-8 {x_1}^2+2 d^2 {x_1}^2-8 {x_2}^2+2 d^2 {x_2}^2+2 {x_1}^2 {x_2}^2-8 d x_1y_1+2 d^3 x_1 y_1+2 d x_1 {x_2}^2 y_1-8 {y_1}^2+2 d^2 {y_1}^2+2 {x_2}^2 {y_1}^2-\\
&&8 d x_2 y_2+2d^3 x_2 y_2+2 d {x_1}^2 x_2 y_2+2 d^2 x_1 x_2 y_1 y_2+2 d x_2 {y_1}^2 y_2-8 {y_2}^2+2d^2 {y_2}^2+2 {x_1}^2 {y_2}^2+2 d x_1 y_1 {y_2}^2+2 {y_1}^2 {y_2}^2+\\
&&2 d x_1 x_2 z_1-d^3 x_1x_2 z_1+4 x_2 y_1 z_1-2 d^2 x_2 y_1 z_1+4 x_1 y_2 z_1-2 d^2 x_1 y_2
z_1+2 d y_1 y_2 z_1-d^3 y_1 y_2 z_1+8 {z_1}^2-6 d^2 {z_1}^2+d^4 {z_1}^2+\\
&&8 d^2 z_2-2 d^4z_2-d x_1 x_2 z_1 z_2-2 x_2 y_1 z_1 z_2-2 x_1 y_2 z_1 z_2-d y_1y_2 z_1 z_2-4 {z_1}^2 z_2+d^2 {z_1}^2 z_2+8 {z_2}^2-2 d^2 {z_2}^2+4 x_1 x_2 z_3-\\
&&2d^2 x_1 x_2 z_3+2 d x_2 y_1 z_3-d^3 x_2 y_1 z_3+2 d x_1 y_2 z_3-d^3 x_1y_2 z_3+4 y_1 y_2 z_3-2 d^2 y_1 y_2 z_3-2 x_1 x_2 z_2 z_3-d x_2 y_1z_2 z_3-d x_1 y_2 z_2 z_3-\\
&&2 y_1 y_2 z_2 z_3+8 {z_3}^2-6 d^2 {z_3}^2+d^4 {z_3}^2-4z_2 {z_3}^2+d^2 z_2 {z_3}^2)
\end{eqnarray*}
\end{tiny}

\subsection{}Terms of maximal degree in $\det G_3$
\begin{tiny}
\begin{eqnarray*}
h(\det G_3) &= &h(\det G_2)^6{\det G_1}^{-9}d^{30}w^3\bar{w}^3\\
&= &d^{66} (-x_1 x_2+x_2 y_1+x_1 y_2-y_1 y_2+d z_1-z_1 z_2-d z_3+z_2
z_3)^{15}\\
&&(-x_1 x_2-x_2 y_1-x_1 y_2-y_1 y_2+d z_1+z_1 z_2+d z_3+z_2
z_3)^{15}\\
&&(-x_1 x_2 z_1-y_1 y_2 z_1+d z_1^2+x_2 y_1 z_3+x_1 y_2
z_3-d {z_3}^2)^{12}\\
&&(2 x_1 x_2 y_1 y_2-d x_1 x_2 z_1-d y_1 y_2 z_1+d^2
{z_1}^2-d x_2 y_1 z_3-d x_1 y_2 z_3+d^2 {z_3}^2)^{12}\\
&&(x_1 x_2 y_1 y_2
z_1-d x_1 x_2 {z_1}^2-d y_1 y_2 {z_1}^2+d^2 {z_1}^3-x_1 x_2 y_1 y_2 z_3+d
x_2 y_1 {z_3}^2+d x_1 y_2 {z_3}^2-d^2 {z_3}^3)^3\\
&&(x_1 x_2 y_1 y_2 z_1-d
x_1 x_2 z_1^2-d y_1 y_2 z_1^2+d^2 {z_1}^3+x_1 x_2 y_1 y_2 z_3-d x_2 y_1
{z_3}^2-d x_1 y_2 {z_3}^2+d^2 {z_3}^3)^3
\end{eqnarray*}
\end{tiny}

\subsection{}$\det G_3$ with substitution $x_1=x_2=y_1=y_2=z_2=0$
\begin{tiny}
\begin{eqnarray*}
\det G_3|_{x_1=x_2=y_1=y_2=z_2=0} &= &(-2 + d)^{16}(-1 + d)^4d^{60}(1 + d)^4(2 + d)^{16}(-3 + d^2)^6({z_ 1} - {z_ 3})^{30} ({z_ 1} + {z_ 3})^{30}\\
&&({z_1}^2 - {z_ 1} {z_ 3} + {z_ 3}^2) ({z_ 1}^2 + {z_ 1} {z_ 3} + {z_3}^2)(-2 d^2 - 2 {z_ 1}^2 + d^2 {z_ 1}^2 - 2 {z_ 3}^2 + d^2 {z_ 3}^2)^{12}\\
&& (-3 d^2 - {z_ 1}^2 + d^2 {z_ 1}^2 + {z_ 1} {z_ 3} - d^2 {z_ 1} {z_ 3} - {z_ 3}^2 + d^2 {z_ 3}^2)^2(-3 d^2 - {z_ 1}^2 + d^2 {z_ 1}^2 - {z_ 1} {z_ 3} + d^2 {z_ 1} {z_ 3} - {z_ 3}^2 + d^2 {z_ 3}^2)^2
\end{eqnarray*}
\end{tiny}

\subsection{}$\det G_1^{F_{0,3}}$ with substitution $x_{a_1} = x_{a_1,a_2} = 0$ for all variables of the form $x_{a_1}$ and $x_{a_1,a_2}$
\begin{tiny}
\begin{eqnarray*}
\det G_1^{F_{0,3}}|_{x_{a_1} = x_{a_1,a_2} = 0} &= &-(d-x_{1,2,3})(d+x_{1,2,3})\\
&&(x_{1,2,-2}x_{1,-1,3}x_{1,-1,-2}+x_{1,3,-3}x_{1,-1,2}x_{1,-1,-3}-x_{1,2,-3}x_{1,-1,3}x_{1,-1,-3}-\\
&&x_{1,-1,-2}x_{1,-1,-2}x_{1,-2,3}-x_{1,2,-2}x_{1,3,-3}x_{1,-2,-3}+x_{1,2,-3}x_{1,-2,3}x_{1,-2,-3})^2
\end{eqnarray*}
\end{tiny}
\end{landscape}

\vspace{5ex}
\begin{minipage}[b]{0.45\linewidth}

Department of Mathematics\newline\indent
The George Washington University\newline\indent
Washington, DC 20052, USA\newline\indent
\texttt{przytyck@gwu.edu}

\end{minipage}
\hspace{3ex}
\begin{minipage}[b]{0.45\linewidth}

Department of Mathematics\newline\indent
Harvard University\newline\indent
Cambridge, MA 02138, USA\newline\indent
\texttt{xzhu@fas.harvard.edu}

\end{minipage}

\end{document}